\newcommand{\G}{\mathcal{G}}
\newtheorem{theorem}{Theorem}[section]
\newtheorem{maintheorem}{Theorem}
\newtheorem{secondtheorem}{Theorem}
\newtheorem{thirdtheorem}{Theorem}
\newcommand{\sing}{{\rm Sing}}
\newtheorem{proposition}[theorem]{Proposition}
\newtheorem{lemma}[theorem]{Lemma}
\theoremstyle{definition}
\newtheorem{example}[theorem]{Example}
\newtheorem{remark}[theorem]{Remark}
\newcommand{\F}{\mathcal{F}}
\newcommand{\C}{\mathbb{C}^2}
\def \C {\mathbb{C}}
\def \calc {{\mathcal C}}
\def \DC {\widetilde{\Delta}}
\def \ddd {\mathcal{D}}
\def \dps {\displaystyle}
\def \fol {{\mathcal F}}
\def \F {{\mathcal F}}
\def \G {{\mathcal G}}
\def \folt {\widetilde{\mathcal{F}}}
\def \ll {\mathcal{L}}
\def \oo {\mathcal{O}}
\def \aa {\mathcal{A}}
\def \P {\mathbb{P}}
\def \pn {\mathbb{P}^n}
\def \pnt {\widetilde{\mathbb{P}}^n}
\def \sing {{\rm Sing}}
\def \singf {{\rm Sing}(\mathcal{F})}
\def \dim {{\rm dim}}
\def \tt {\mathcal{T}}
\def \cod {{\rm cod}}
\def \Z {{\mathbb Z}}
\begin{document}
\title{On foliations by curves with singularities of positive dimension}
%\title{Residue-type indices and second type holomorphic foliations}
\date{\today}
\author{Arturo Fern\' andez-P\' erez and Gilcione Nonato Costa}
\address{Departamento de Matem\'atica - ICEX, Universidade Federal de Minas Gerais, UFMG}
\curraddr{Av. Ant\^onio Carlos 6627, 31270-901, Belo Horizonte-MG, Brasil.}
\email{fernandez@ufmg.br}
\email{gilcione@ufmg.br}
\subjclass[2010]{Primary 32S65; Secondary 58K45}
\keywords{Foliations by curves, non-isolated singularities, Milnor number}

\begin{abstract}
We present enumerative results for holomorphic foliations by curves on $\pn$, $n\geq 3$, with singularities of positive dimension. 
Some of the results presented improve previous ones due to Corr\^ea--Fern\'andez-P\'erez--Nonato Costa--Vidal Martins \cite{correa} and Nonato Costa \cite{toulouse}. We also present an enumerative result  bounding the number of isolated singularities in a projective subvariety invariant by a holomorphic foliation by curves on $\pn$ with a singularity of positive dimension.   
%Let $\fol$ be a holomorphic foliation by curves on $\pn$, $n\geq 3$, such that its singular set $\singf$ is a
%disjoint union of projective subvarieties of pure codimension $d\geq 2$ and some isolated points. Under certain assumptions,
%we determine the number of isolated points, counted with multiplicity, in terms of the invariants of $\fol$.
%Furthermore, if we suppose that $\fol$ admits an invariant projective subvariety $V$ which contains
%an irreducible component of positive dimension and some isolated points of $\singf$, then we give a bound for the number of isolated points of $\fol$, counted with multiplicity,  on $V$.
\end{abstract}
\maketitle
%\tableofcontents

%%%%%%%%%%%%%%%%%%%%%%%%%%%%%%%%%%%%%%
%Introduction

\section{Introduction and statement of results}

\par The purpose of the present paper is to establish numerical relations between the invariants of a foliation by curves $\fol$
on the complex projective space $\pn$ and the characteristic classes
of the irreducible components of its singular set $\sing(\fol)$. By
numerical invariants of a foliation $\fol$, we refer to Chern
numbers of its tangent sheaf, the order of vanishing and the
multiplicity of $\fol$ along its singularities. Since any foliation
by curves on $\pn$ with singular set of positive dimension can be
deformed in to a foliation by curves with isolated singularities
\cite{bound}, we address the problem of determining, in terms of
Milnor number, the numerical contribution that each irreducible
component  of the singular set offers after being deformed.

\par More precisely, a holomorphic foliation by curves $\fol$ of $\pn$, $n\geq 2$, of degree $k\geq1$, is defined  by a morphism
$$\Phi:\mathcal{O}_{\pn}(1-k)\to T\pn$$
whose singular set $\sing(\fol):=\{p:\Phi(p)=0\}$ has codimension at
least two. Outside the singular set of $\fol$, the morphism $\Phi$
induces an integrable distribution $\mathcal{D}$ of tangent
directions. By definition, the leaves of $\fol$ are the leaves
induced by $\mathcal{D}$ on $\pn\setminus\sing(\fol)$. We call
$\ll_{\fol}:=\mathcal{O}_{\pn}(1-k)$ the \textit{tangent sheaf} of
$\fol$. It follows from the definition that $\fol$ can be
represented in affine coordinates $(z_1,...,z_n)$ by a vector field
of the form
$$X(z)=\sum^{k}_{j=0}X_j(z)+g(z)\cdot R(z)$$
where $R(z)=\displaystyle\sum^{n}_{i=1}z_{i}\frac{\partial}{\partial z_{i}}$,
$g\in\mathbb{C}[z_1,\ldots,z_n]$ is homogeneous of degree $k$ and
$X_{j}$, $0\leq j\leq k$, is a vector field whose components are
homogenous polynomials of degree $j$.
\par Let $W$ be a projective subvariety of $\pn$ of codimension $d\geq 2$. Let $\widetilde{\P}^n$ be the blowup of $\pn$ along $W$, and $\pi:\widetilde{\P}^n\rightarrow \pn$
the blowup morphism with exceptional divisor $E:=\pi^{-1}(W)$. Here
and subsequently $\widetilde{\fol}:=\pi^{*}\fol$ denotes the strict
transformed of $\fol$ by $\pi$ and  $\ell$ denotes the order of
vanishing of $\widetilde{\fol}$ along $E$. It is easy to check that
tangent sheaf of $\widetilde{\fol}$ is
$$\mathcal{L}_{\tilde{\fol}}=\pi^{*}(\mathcal{L}_{\fol})\otimes\mathcal{O}_{\widetilde{\P}^n}(\ell
E).$$
\par A holomorphic  foliation by curves $\fol$ on $\pn$ it said to be {\it special } along $W\subset\sing(\fol)$, if
$E$ is an invariant set of $\widetilde{\fol}$, and ${\rm
Sing}(\widetilde{\fol})$ meets $E$ in at most isolated singularities
. Special foliation along curves has been studied for $n=3$ in
\cite{toulouse} and for $n>3$ in \cite{correa} and \cite{indices}.
As we will see below, the notion of speciality of a foliation is
fundamental to the study of the irreducible components of the
singular set of a foliation by curves.
\par In what follows $\mathcal{W}^{(d)}_{\delta}$ and $\sigma_i^{(d)}$ are
the Wronski (or complete symmetric) and elementary symmetric
functions of degree $\delta$ in $d$ variables, respectively. That is

$$\mathcal{W}^{(d)}_{\delta}:=\mathcal{W}^{(d)}_{\delta}(k_1,\ldots,k_{d})=\sum_{i_{1}+\ldots+i_{d}=\delta}k^{i_{1}}_{1}\ldots k^{i_{d}}_{d}$$
and
$$\sigma^{(d)}_i:=\sigma_i^{(d)}(k_1,\ldots,k_d)=\dps\sum_{1\le
j_{1}<\ldots<j_{i}\le d}k_{j_1}\cdots k_{j_i}.$$
\par When $W$ is a smooth intersection complete on $\pn$ of multidegree $(k_1,\ldots,k_d)$, we will use the symbol $\nu(\fol,W)$ to denote
\begin{align}\label{nu_equation}
\nu(\fol,W)=-\deg(W)\sum_{|a|=0}^{n-d}\sum_{m=0}^{n-d-|a|}(-1)^{\delta_{|a|}^{m}}\frac{\varphi_{a}^{(m)}(\ell)}{m!}(\deg(\fol-1)^{m}\sigma_{a_1}^{(d)}\tau_{a_2}^{(d)}\mathcal{W}_{\delta_{|a|}^{m}}^{(d)},
\end{align}
where $a=(a_1,a_2)$, $0\le a_1 \le d$, $0\le a_2 \le n-d$, $|a|=a_1+a_2$,
$\delta_{|a|}^{m}:=n-d-|a|-m$,
\begin{align*}
\varphi_{a}(x)=x^{n-d-a_2}(1+x)^{d-a_1}\,\,\,\text{and}\,\,\,\,
\tau_i^{(d)}=\dps\sum_{j=0}^{i}(-1)^j{{n+1}\choose{i-j}}\mathcal{W}_j^{(d)}(k_1,\ldots,k_d).
\end{align*}
\par We will show that $\nu(\fol,W)$ is strongly related with the numerical contribution that $W$ offers after a small deformation of $\fol$. To complete our notation, we introduce the notion of
\textit{embedded close points} of an irreducible component of $W$ of
$\sing(\fol)$. More precisely, for an irreducible component $W$ of
$\sing(\fol)$, we will prove that it is possible to deform $\fol$
into a foliation by curves $\fol_t$ (for some $0<t<\epsilon$) on
$\pn$ such that either $W$ is invariant by $\fol_t$ or $\fol_t$ is
special along $W$ (see Lemma \ref{lema_Theorem1}). The set of
isolated points $\{p_i^t,i=1,\ldots,s_t\}\subset \sing(\fol_t)$ such
that $p_i^t\notin W$ but $\displaystyle\lim_{t\to0}p_i^t \in W$ will
be called \textit{embedded close points} associated to $W$ and we
will be denoted by $\mathcal{A}_W$.
\par With the above notation, we state the main result.

%%%%%%%%Main theorem%%%%%%%%%%%%
\begin{maintheorem}\label{theorem1}
Let $\fol$ be a holomorphic foliation by curves on $\pn$, $n\geq 3$,
of degree $k$. Suppose that its singular set $\sing(\fol)$ is the
disjoint union of smooth scheme-theoretic complete intersection
subvarieties $W_1,\ldots,W_r$ of pure codimension $d_j\geq 2$,
respectively, and points $p_1,\ldots,p_s$. Then
 $$\dps\sum_{i=1}^{s}\mu(\fol,p_i)=\dps\sum_{i=0}^{n}k^i+\dps\sum_{i=1}^r
\nu(\fol,W_i)-\sum_{i=1}^{r}N(\fol,\aa_{W_i}),$$
where $N(\fol,\aa_{W_i})$ denotes the number (counted with multiplicity) of embedding closed points associated to each $W_i$.
\end{maintheorem}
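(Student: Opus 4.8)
The plan is to prove the identity by a deformation argument combined with a localized Chern-class computation on a blow-up, organized around the principle that the total Milnor number of a foliation by curves is a deformation invariant. First I would invoke Lemma~\ref{lema_Theorem1} to deform $\fol$ into a foliation by curves $\fol_t$, $0<t<\epsilon$, having only isolated singularities and such that along each component $W_i$ the deformation is either invariant or special. Since the number of isolated singularities of a degree-$k$ foliation by curves on $\pn$, counted with multiplicity, is the deformation-invariant Chern number
\begin{equation*}
\sum_{q\in\sing(\fol_t)}\mu(\fol_t,q)=\int_{\pn}c_n\big(T\pn\otimes\mathcal{O}_{\pn}(k-1)\big)=\sum_{i=0}^{n}k^{i},
\end{equation*}
where $\ll_{\fol}^{-1}=\mathcal{O}_{\pn}(k-1)$ and the last equality follows from $c(T\pn)=(1+h)^{n+1}$, the left-hand side is independent of $t$ and equals $\sum_{i=0}^n k^i$.

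Next I would localize this total by the limits of the singular points as $t\to 0$. Each $q\in\sing(\fol_t)$ converges either to one of the isolated points $p_j$ or into one of the $W_i$. By conservation of the local Milnor number, the points collapsing onto a fixed $p_j$ contribute exactly $\mu(\fol,p_j)$, so
\begin{equation*}
\sum_{i=0}^{n}k^{i}=\sum_{j=1}^{s}\mu(\fol,p_j)+\sum_{i=1}^{r}M_i,\qquad M_i:=\sum_{\substack{q\in\sing(\fol_t)\\ \lim_{t\to 0}q\in W_i}}\mu(\fol_t,q).
\end{equation*}
The theorem is then equivalent to the single-component identity $M_i=-\nu(\fol,W_i)+N(\fol,\aa_{W_i})$.

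The heart of the argument is the evaluation of $M_i$, carried out on the blow-up $\pi:\pnt\to\pn$ along $W_i$ with exceptional divisor $E$. Because $\pi$ is an isomorphism off $E$ and, by speciality, $\widetilde{\fol}_t$ has only isolated singularities with $E$ invariant and tangent sheaf $\ll_{\widetilde{\fol}_t}=\pi^{*}\ll_{\fol_t}\otimes\mathcal{O}_{\pnt}(\ell E)$, the singularities of $\widetilde{\fol}_t$ lying off $E$ are exactly the lifts of those of $\fol_t$ off $W_i$; among these, the ones limiting to $W_i$ are the embedded close points contributing $N(\fol,\aa_{W_i})$, while the remaining contribution is supported on $E$. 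Comparing the two ambient counts, the difference
\begin{equation*}
\int_{\pnt}c_n\big(T\pnt\otimes\ll_{\widetilde{\fol}_t}^{-1}\big)-\int_{\pn}c_n\big(T\pn\otimes\ll_{\fol_t}^{-1}\big)
\end{equation*}
is localized over $W_i$, and together with the embedded-close-point correction recovers $M_i$. I would then evaluate $\int_{\pnt}$ explicitly using the exact sequence relating $T\pnt$ and $\pi^{*}T\pn$ along $E$, the identity $\mathcal{O}_{\pnt}(E)|_E=\mathcal{O}_E(-1)$, and the projective-bundle structure $E=\mathbb{P}(N_{W_i/\pn})$ with $N_{W_i/\pn}=\bigoplus_{j}\mathcal{O}_{W_i}(k_j)$. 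The localized part reduces to a Segre-class integral over $W_i$; the Chern roots $k_1,\dots,k_d$ of the normal bundle assemble into the symmetric functions $\sigma^{(d)},\mathcal{W}^{(d)},\tau^{(d)}$, and Taylor-expanding $\varphi_a(x)=x^{n-d-a_2}(1+x)^{d-a_1}$ about $x=\ell$ (producing the coefficients $\varphi_a^{(m)}(\ell)/m!$) reproduces the triple sum in \eqref{nu_equation}, giving $-\nu(\fol,W_i)$. Substituting $M_i=-\nu(\fol,W_i)+N(\fol,\aa_{W_i})$ into the displayed sum yields the theorem.

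The main obstacle I anticipate is precisely this last computation on the blow-up: carrying out the localization of $c_n(T\pnt\otimes\ll_{\widetilde{\fol}_t}^{-1})$ on $E$ and matching it term-by-term with \eqref{nu_equation}, while keeping rigorous track of which nearby singularities of $\fol_t$ are genuine embedded close points off $W_i$ and which are absorbed into $W_i$. Treating the invariant case and the special case uniformly, and verifying that the order of vanishing $\ell$ enters correctly through $\mathcal{O}_{\pnt}(\ell E)$, are the delicate points on which the whole identity hinges.
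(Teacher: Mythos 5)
Your overall skeleton (deformation via Lemma~\ref{lema_Theorem1}, blow-up, Baum--Bott, Chern-class computation matching $\nu$) is the one the paper uses, but the organizing principle you build it on contains a genuine error. Lemma~\ref{lema_Theorem1} does \emph{not} produce a foliation $\fol_t$ with only isolated singularities: when the vanishing order $\ell$ along the exceptional divisor is positive, conclusions (iii) and (v) of that lemma give $\sing(\fol_t)=W_i\cup\{p_1^t,\ldots,p_{s_t}^t\}$, so $W_i$ remains a positive-dimensional component of the singular set of $\fol_t$; ``special along $W_i$'' only means that the \emph{strict transform} $\widetilde{\fol}_t$ meets $E$ in isolated singularities. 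Consequently your first display, $\sum_{q\in\sing(\fol_t)}\mu(\fol_t,q)=\sum_{i=0}^{n}k^i$, is not available (the isolated-singularity Baum--Bott formula does not apply to $\fol_t$ on $\pn$), and your bookkeeping identity $\sum_{i=0}^{n}k^i=\sum_{j}\mu(\fol,p_j)+\sum_i M_i$ fails. Indeed, with your definition $M_i$ is a sum over \emph{isolated} singularities of $\fol_t$ limiting into $W_i$, and since $\sing(\fol_t)$ is the disjoint union of $W_i$ with isolated points, those are exactly the embedded close points, i.e.\ $M_i=N(\fol,\aa_{W_i})$; your identity would then force $\nu(\fol,W_i)=0$, which is false in general (e.g.\ Lemma~\ref{lema5} gives $\nu(\fol,W)=-(k^d+\ldots+k^n)$). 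The missing mass $-\nu(\fol,W_i)$ is carried by the non-isolated component $W_i$ itself and cannot be written as Milnor numbers of points of $\fol_t$.

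The paper avoids this by never writing a global Milnor-number identity for $\fol_t$ on $\pn$ when $\ell>0$: all counting is done upstairs, where speciality genuinely yields isolated singularities, in the form $\sum_{j}\mu(\fol,p_j)=N(\widetilde{\fol}_t,\pnt)-N(\widetilde{\fol}_t,E)-N(\fol,\aa_{W_i})$, and then Theorem~\ref{nunsing1} (the proof of Theorem~\ref{second}, which is exactly the Porteous/Chern-class computation you sketch) evaluates $N(\widetilde{\fol}_t,\pnt)-N(\widetilde{\fol}_t,E)=\sum_{i=0}^{n}k^i+\nu(\fol,W_i)$. Note also that $N(\widetilde{\fol}_t,\pnt)$ is \emph{not} $\sum_{i=0}^{n}k^i$: the blow-up count itself carries $\ell$-dependent correction terms, and it is only the difference with $N(\widetilde{\fol}_t,E)$ that produces $\nu$, so ``comparing the two ambient counts'' alone does not recover the needed quantity. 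Your case $\ell=0$ (where $W_i$ becomes $\fol_t$-invariant and $\fol_t$ really does have only isolated singularities) is sound, provided you add the identification $N(\fol_t,W_i)=-\nu(\fol,W_i)|_{\ell=0}$ of Proposition~\ref{casel0}; but the case $\ell>0$, which is the substance of the theorem, requires the reformulation on the blow-up described above.
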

\par It follows easily that  the numerical contribution of each $W_i$ is exactly  $N(\fol,\aa_{W_i})-\nu(\fol,W_i)$. In this sense, it can be interpret it as the Milnor number of $\fol$ at $W_i$,
\begin{equation}
\mu(\fol,W_i):=N(\fol,\aa_{W_i})-\nu(\fol,W_i).
\end{equation}
\par In general, for an arbitrary foliation with non-isolated singularities on $\pn$ it seems to be difficult to determine effectively $N(\fol,\aa_{W_i})$.
In the final section of the paper, we present a family of foliations
by curves on $\pn$ where it is possible to obtain an upper bound to
this number.
\par On the other hand, to prove Theorem \ref{theorem1} we need prove the following result.
%%%%%%%Main Corollary%%%%%%%%%%%%%

\begin{secondtheorem}\label{second}
Let $\fol$ be a holomorphic foliation by curves on $\pn$, $n\geq 3$,
of degree $k$. Suppose that its singular set $\sing(\fol)$ is the
disjoint union of smooth scheme-theoretic complete intersection
subvarieties $W_1,\ldots,W_r$ of pure codimension $d_j\geq 2$,
respectively, and points $p_1,\ldots,p_s$. If $\fol$ is special
along each $W_i$.
%assume also that
%for all $i$, $W_i$ is a non-dicritical component of $\fol$.
Then
$$ \dps\sum_{i=1}^{s}\mu(\fol,p_i) =
\dps\sum_{i=0}^{n}k^i+\dps\sum_{i=1}^r
\nu(\fol,W_i).%-\sum_{q=1}^{r}N(\fol,\aa_{W_q})
$$
 In particular, $$\mu(\fol,W_i)=-\nu(\fol,W_i).%+N(\fol,\aa_{W_q})
$$

\end{secondtheorem}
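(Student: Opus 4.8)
The plan is to reduce the statement, by blowing up $\pn$ along $W=\bigsqcup_i W_i$, to the classical global count of isolated singularities of a foliation by curves, and then to carry out the resulting intersection computation on the blowup. Recall the global count: if $\G$ is a foliation by curves with isolated singularities on a compact complex $n$-manifold $M$ with tangent sheaf $\mathcal{L}_{\G}$, then $\G$ is a global section of $TM\otimes\mathcal{L}_{\G}^{-1}$ and $\sum_q\mu(\G,q)=\int_M c_n(TM\otimes\mathcal{L}_{\G}^{-1})$. On $M=\pn$ with $\mathcal{L}_{\fol}=\mathcal{O}(1-k)$ this already yields $\int_{\pn}c_n(T\pn\otimes\mathcal{O}(k-1))=\sum_{i=0}^{n}k^{i}$, the leading term on the right-hand side, since $c(T\pn)=(1+h)^{n+1}$ gives $\sum_{j=0}^{n}\binom{n+1}{j}(k-1)^{n-j}=\frac{k^{n+1}-1}{k-1}$.

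Because $\fol$ is special along each $W_i$, on the blowup $\pi:\widetilde{\P}^n\to\pn$ the exceptional divisor $E=\bigsqcup_i E_i$ is invariant by $\widetilde{\fol}$ and $\sing(\widetilde{\fol})$ is finite. Using $\mathcal{L}_{\widetilde{\fol}}=\pi^{*}\mathcal{O}(1-k)\otimes\mathcal{O}(\ell E)$, the global count applies on $\widetilde{\P}^n$. Since $\pi$ is a biholomorphism off $E$ and $W$ is exactly the non-isolated part of $\sing(\fol)$, the singularities of $\widetilde{\fol}$ outside $E$ are in Milnor-number-preserving bijection with $p_1,\dots,p_s$, while the rest lie on $E$. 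Splitting the integral gives
\begin{equation*}
\sum_{j=1}^{s}\mu(\fol,p_j)=\int_{\widetilde{\P}^n}c_n\big(T\widetilde{\P}^n\otimes\pi^{*}\mathcal{O}(k-1)\otimes\mathcal{O}(-\ell E)\big)-\sum_{q\in\sing(\widetilde{\fol})\cap E}\mu(\widetilde{\fol},q).
\end{equation*}

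I would then evaluate the two right-hand terms. For the Chern number I would use the blowup formulas for $c(T\widetilde{\P}^n)$ together with $c(T\pn)=(1+h)^{n+1}$, the description $E=\mathbb{P}(N_{W/\pn})$ with $N_{W/\pn}=\bigoplus_{j=1}^{d}\mathcal{O}_W(k_j)$ (where the multidegree enters), the relation $\mathcal{O}(E)|_E=\mathcal{O}_E(-1)$, the projection formula for $\pi_{*}$, and the Segre classes of $N_{W/\pn}$. The elementary and complete symmetric functions $\sigma^{(d)}_{a_1}$, $\mathcal{W}^{(d)}_{\delta}$ and the combinatorial factor $\tau^{(d)}_{a_2}$ appearing in $\nu(\fol,W)$ are precisely the symmetric functions of $(k_1,\dots,k_d)$ produced by these Chern and Segre classes, while $\varphi_a(x)=x^{n-d-a_2}(1+x)^{d-a_1}$, evaluated and differentiated at $\ell$ (hence the factors $\varphi_a^{(m)}(\ell)/m!$), records the $\mathcal{O}(-\ell E)$-twist. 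For the singular contribution on $E$ I would exploit the invariance: the defining section restricts to a section of $TE\otimes\mathcal{L}_{\widetilde{\fol}}^{-1}|_E$, whose zeros are computed on $E=\mathbb{P}(N_{W/\pn})$ by a further Chern-number computation, after comparing the ambient Milnor number $\mu(\widetilde{\fol},q)$ with the intrinsic index on the invariant divisor $E$.

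The main obstacle is the bookkeeping in merging these two computations: one must show that the $E$-supported part of the blowup Chern number, minus the total Milnor number carried by the isolated singularities of $\widetilde{\fol}$ on $E$, collapses to exactly $\sum_i\nu(\fol,W_i)$. This demands an exact match between the Segre-class contributions of $N_{W/\pn}$ and the ambient-versus-intrinsic index comparison along $E$, and the delicate part is organizing the double sum over $a=(a_1,a_2)$ and $m$ with the signs $(-1)^{\delta_{|a|}^{m}}$. Once this identity is established, the displayed formula follows, and the final assertion $\mu(\fol,W_i)=-\nu(\fol,W_i)$ is immediate, since speciality forces $N(\fol,\aa_{W_i})=0$.
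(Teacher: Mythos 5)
Your plan coincides with the paper's own proof (Theorem \ref{nunsing1}): blow up along the $W_i$, use speciality to conclude that $E$ is $\widetilde{\fol}$-invariant with $\sing(\widetilde{\fol})$ finite, write $\sum_{j}\mu(\fol,p_j)=N(\widetilde{\fol},\pnt)-N(\widetilde{\fol},E)$, and evaluate both terms by Baum--Bott combined with the Porteous blowup formula, the relation $c_1(\ll_{\widetilde{\fol}}^*)=\pi^*c_1(\ll_{\fol}^*)-\ell E$, and the symmetric-function calculus on $E=\P(N_{W/\pn})$ that yields the factors $\varphi_a^{(m)}(\ell)/m!$ and hence $\nu(\fol,W_i)$, with speciality forcing the set of embedded closed points to be empty so that $\mu(\fol,W_i)=-\nu(\fol,W_i)$. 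The only difference is completeness rather than method: you leave the double-sum bookkeeping as an outline, while the paper carries it out explicitly, but every ingredient you list is precisely the one the paper uses.
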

\par  Note that the speciality  condition implies that the set of embedding closed points of subvarieties $W_i$ is empty, for all $i=1,\ldots,r$.
Moreover, in this case, $\mu(\fol,W_i)$ is explicitly determined and
depends of the invariants of $\fol$ and $W$. Since we work with
irreducible components of $\sing(\fol)$ of any dimension, it is
clear that Theorem \ref{second} generalizes Theorem 1 of
\cite{correa} and the main result of \cite{toulouse}.
\par Now, in the same spirit, the paper also addresses the following question: given a foliation by curves $\fol$ on $\pn$, $n\geq 3$, and a smooth algebraic subvariety $V$ of $\pn$ invariant by $\fol$.
Assuming that the restriction foliation $\fol|_{V}$ to $V$ has only
an irreducible component $W$ of positive dimension and some isolated
points of $\singf$ contained in $V$. How many isolated singularities
of $\fol$, counted with multiplicity, are there on $V$?
\par The above question was completely solved by Marcio Soares \cite{soares} in the case that $W$ is empty, that is, when $\fol$ only admits isolated singularities on $V$.
It is known from \cite{soares} that the number of isolated
singularities of $\fol$ in $V$ only depends of invariants of $\fol$
and $V$. The results of \cite{soares} were applied to solve an
important question, originally raised by Henri Poincar\'e
\cite{poincare}, of bounding the degree of an irreducible plane
algebraic curve invariant by a holomorphic foliation $\fol$ of
$\mathbb{P}^2$, in terms of the degree of the foliation. For more
details about this important problem we refer to a \cite{inventions}
and the references within. Of course, in the case $W\neq\emptyset$,
our expectation is proving a similar result.

\par Let $V=Z(f_1,\ldots,f_m)$ be a smooth complete intersection on $\pn$. Let $\widetilde{\fol}$ be the strict transform of $\fol$ by the blowup $\pi$ at $W$ with exceptional divisor $E$. Assume that $W\subset V$.
Let $\widetilde{V}=\overline{\pi^{-1}(V\setminus W)}$ and
$V_E=\widetilde{V}\cap E$. Set $N:=N_{W/\pn}$,
$\zeta:=c_1(\mathcal{O}_N(-1))$ and $\mathbf{h}$ the class of a
hyperplane of $\pn$. If $W=Z(h_1,\ldots,h_d)$ is a smooth complete
intersection on $\pn$ of multidegree $(k_1,\ldots,k_d)$, we define
$$\tau_i^{(m)}:=\tau_i^{(m)}(d_1,\ldots,d_m)=\dps\sum_{j=0}^{i}(-1)^j{{n+1}\choose{i-j}}\mathcal{W}_j^{(m)}(d_1,\ldots,d_m).$$
with $d_j=\deg(f_j)$ and also define the following
\begin{align*}
\nu(\fol,V,W)&=\dps\sum_{i=0}^{n-d}\sum_{j=0}^{n-d-i}\frac{\Omega^{(j)}(\ell)}{j!}(-1)^{\gamma_i^j}\tau_i^{(m)}(k-1)^j\alpha_{\widetilde{V}_E}^{\gamma_i^j-1}
\end{align*}
where $\gamma_i^j=n-m-i-j$,
$\alpha_{{V}_E}^{(i)}:=\dps\int_{\widetilde{V}_E}\pi^*(\mathbf{h})^{n-m-1-i}\cdot
\zeta^i$ and $\Omega(x)=\dps\sum_{p=i}^{n-m-j}x^{n-m-p}$.
\par Let $\mathcal{I}(W):=\{g\in\mathbb{C}[x_0,\ldots,x_n]:g=\lambda_1h_1+\ldots+\lambda_d h_{d},\quad\text{for some} \quad(\lambda_1,\ldots,\lambda_d)\in\C^{d}\}.$
With the above notation, we give a partial solution to the above
question in a very particular case.
\begin{thirdtheorem}\label{principal}
Let $\fol$ be a foliation by curves on $\P^n$, $n\ge 3$, of degree $k$, such that
$$
 \mbox{Sing}(\fol) = W \cup\{p_1,\ldots,p_s\},
$$
where $W=Z(h_{1},\ldots,h_{d})$ is a smooth complete intersection of $\pn$ and $p_j$, $1\leq j\leq s$, are isolated points disjoint to $W$.
Suppose that  $V=Z(f_1,\ldots,f_m)$ is a smooth irreducible complete intersection of $\pn$, invariant by $\fol$ and such that $f_i\in\mathcal{I}(W)$ for all $1\leq i\leq m$. Then $W\subset V$ and

$$ \sum_{p\in V\setminus W}\mu(\fol,p)\le N(\fol,V)+\nu(\fol,V,W)+N(\fol,A_{V\setminus W}),$$
where $N(\fol,A_{V\setminus W})$ is the number, counted with
multiplicity, of embedding closed points of $V\setminus W$.
\end{thirdtheorem}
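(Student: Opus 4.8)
The plan is to mimic the strategy behind Theorems \ref{theorem1} and \ref{second}: restrict $\fol$ to the invariant subvariety $V$, read off the total Baum--Bott number there by a Soares-type Chern class computation, then blow up along $W$ and localize the contribution of $W$ along the exceptional divisor, identifying it with $\nu(\fol,V,W)$.

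First I would dispose of the inclusion $W\subset V$. Since each $f_i\in\mathcal{I}(W)$ is a $\C$-linear combination $f_i=\sum_{j}\lambda_{ij}h_j$, it vanishes identically on $W=Z(h_1,\ldots,h_d)$; hence $W\subseteq Z(f_1,\ldots,f_m)=V$, and as both are smooth complete intersections $W$ is a smooth subvariety of $V$ of codimension $d-m$. Because $V$ is invariant, the defining morphism $\Phi\colon\mathcal{O}_{\pn}(1-k)\to T\pn$ restricts to $\mathcal{O}_{V}(1-k)\to TV$, so $\fol|_{V}$ is a foliation by curves on $V$ with tangent sheaf $\mathcal{O}_{V}(1-k)$ and singular set $\sing(\fol)\cap V=W\cup\{p_j:p_j\in V\}$. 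By the Soares formula \cite{soares}, when these singularities are isolated their total number, counted with multiplicity, equals
\[
N(\fol,V)=\int_{V}c_{n-m}\bigl(TV\otimes\mathcal{O}_{V}(k-1)\bigr),
\]
a number depending only on the invariants of $\fol$ and $V$.

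Next I would pass to the blowup $\pi\colon\pnt\to\pn$ along $W$, with exceptional divisor $E$ and $\widetilde{\fol}=\pi^{*}\fol$ vanishing to order $\ell$ along $E$. Since $W\subset V$, the strict transform $\widetilde{V}=\overline{\pi^{-1}(V\setminus W)}$ is the blowup of $V$ along $W$, smooth, with exceptional divisor $V_{E}=\widetilde{V}\cap E$, a projective bundle over $W$; invariance is inherited, so $\widetilde{\fol}|_{\widetilde{V}}$ is a foliation by curves on $\widetilde{V}$ with tangent sheaf $\pi^{*}\mathcal{O}_{V}(1-k)\otimes\mathcal{O}_{\widetilde{V}}(\ell V_{E})$. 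Using Lemma \ref{lema_Theorem1}, and exploiting the hypothesis $f_i\in\mathcal{I}(W)$ to keep $V$ invariant throughout, I would deform $\fol$ to a foliation special along $W$, so that all singularities of $\widetilde{\fol}|_{\widetilde{V}}$ off $V_{E}$ become isolated. By conservation of the Milnor number under the deformation, these off-$V_E$ singularities account for the original isolated points of $V\setminus W$ together with the points that flow out of $W$, giving the lower bound
\[
\sum_{q\notin V_{E}}\mu\bigl(\widetilde{\fol}|_{\widetilde V},q\bigr)\ \ge\ \sum_{p\in V\setminus W}\mu(\fol,p)+N(\fol,A_{V\setminus W}).
\]

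The heart of the argument is the Chern class bookkeeping on $\widetilde{V}$. Applying Baum--Bott to $\widetilde{\fol}|_{\widetilde{V}}$ yields
\[
\int_{\widetilde{V}}c_{n-m}\bigl(T\widetilde{V}\otimes\pi^{*}\mathcal{O}_{V}(k-1)\otimes\mathcal{O}_{\widetilde{V}}(-\ell V_{E})\bigr)=\Bigl(\sum_{q\notin V_{E}}\mu\Bigr)+\mathcal{C}_{V_{E}},
\]
where $\mathcal{C}_{V_{E}}\ge 0$ collects the contribution concentrated along $V_{E}$. I would then evaluate the left-hand integral by the projection formula, peeling off $N(\fol,V)$ and expanding the remaining exceptional terms on the projective bundle $V_{E}\subset E$ in terms of $\zeta=c_1(\mathcal{O}_N(-1))$, the hyperplane class $\mathbf{h}$, the order $\ell$, and the degrees $d_i=\deg(f_i)$. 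The expansion of $c_{n-m}$ of the twisted tangent bundle, together with the normal-bundle relations on $V_{E}$, is exactly what produces the closed expression $\nu(\fol,V,W)$ with its factors $\tau_i^{(m)}$, $\Omega^{(j)}(\ell)$ and the integrals $\alpha_{\widetilde{V}_E}^{(i)}$. Combining the identity for the integral with $\mathcal{C}_{V_{E}}\ge 0$ and the displayed lower bound then delivers
\[
\sum_{p\in V\setminus W}\mu(\fol,p)\le N(\fol,V)+\nu(\fol,V,W)+N(\fol,A_{V\setminus W}).
\]

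The main obstacle I anticipate is twofold and concentrated on $V_{E}$. First, identifying the exceptional part of the integral with the closed formula for $\nu(\fol,V,W)$ requires the full Segre-class computation on the projective bundle $V_{E}\subset E$—tracking how $c_{n-m}$ of $TV\otimes\mathcal{O}(k-1)$, twisted by $\mathcal{O}(-\ell V_{E})$, expands against $\zeta$ and $\pi^{*}\mathbf{h}$—and it is here that $f_i\in\mathcal{I}(W)$ is essential, since it pins down how $\widetilde{V}$ meets $E$ and hence the classes $\alpha_{\widetilde{V}_E}^{(i)}$. Second, because speciality along $W$ is \emph{not} assumed a priori, the localized term $\mathcal{C}_{V_{E}}$ need not vanish and the singularities on $V_{E}$ may fail to be isolated, so it can only be controlled by its sign; this, together with the fact that the special deformation need not preserve the invariance of $V$ exactly, is precisely what turns the equality of the special case (compare Theorem \ref{second} and \cite{correa}) into the inequality asserted here, with the embedded close points $N(\fol,A_{V\setminus W})$ absorbing the remaining slack.
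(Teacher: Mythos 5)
Your overall skeleton --- deform via Lemma \ref{lema_Theorem1} (which keeps $V$ invariant precisely because $f_i\in\mathcal{I}(W)$), blow up along $W$, and evaluate Baum--Bott integrals on $\widetilde{V}$ --- is the same as the paper's, but the step you call ``conservation of the Milnor number'' is where your argument breaks. You restrict to $\widetilde{V}$ and work with Milnor numbers of $\widetilde{\fol}_t|_{\widetilde{V}}$, while the left-hand side of the theorem involves the \emph{ambient} numbers $\mu(\fol,p)$. For a foliation tangent to $V$ there is a surjection of local algebras showing $\mu(\fol,p)\ge\mu(\fol|_V,p)$, and conservation applied to the restricted foliation yields an \emph{equality}
$$\sum_{q\notin V_E}\mu(\widetilde{\fol}_t|_{\widetilde{V}},q)=\sum_{p\in V\setminus W}\mu(\fol|_V,p)+\varepsilon_t,$$
where $\varepsilon_t\ge 0$ counts singularities of $\widetilde{\fol}_t|_{\widetilde{V}}$ off $V_E$ that converge to $V_E$. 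Your claimed lower bound, with the larger ambient numbers plus $N(\fol,A_{V\setminus W})$ on the right, is unjustified and effectively backwards: $N(\fol,A_{V\setminus W})$ counts ambient singularities of $\fol_t$ that lie \emph{off} $V$ and converge to points of $V\setminus W$, a set of points invisible to the restricted foliation, so nothing forces $\varepsilon_t$ to dominate it. Indeed, were your bound true, your final combination would prove the stronger inequality with $-N(\fol,A_{V\setminus W})$. The term enters the theorem for the opposite reason: ambient singularities of $\fol_t$ can leave $V$ while still converging to some $p_j$, so the Baum--Bott count on $\widetilde{V}$ \emph{undercounts} $\sum_j\mu(\fol,p_j)$, and this deficit is exactly what $+N(\fol,A_{V\setminus W})$ compensates on the upper-bound side. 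That is how the paper argues: ambient conservation in $\pnt$, splitting the singularities of $\widetilde{\fol}_t$ converging to $\widetilde{V}$ into those on $\widetilde{V}$ (giving $N(\widetilde{\fol}_t,\widetilde{V})-N(\widetilde{\fol}_t,\widetilde{V}_E)$) and those off $\widetilde{V}$ (giving the embedded-points term).

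There is a second gap: you cannot extract $\nu(\fol,V,W)$ from the single integral $\int_{\widetilde{V}}c_{n-m}(T{\widetilde{V}}\otimes\ll_{\widetilde{\fol}}^{*})$ while controlling the $V_E$-contribution $\mathcal{C}_{V_E}$ only through its sign. By Proposition \ref{propfin}, $\nu(\fol,V,W)=N(\widetilde{\fol},\widetilde{V})-N(\widetilde{\fol},\widetilde{V}_E)-N(\fol,V)$, so the count on $\widetilde{V}_E$ must be computed exactly (part (ii) of that proposition, a second Baum--Bott integral over $\widetilde{V}_E$): discarding it leaves your bound too large by $N(\widetilde{\fol},\widetilde{V}_E)$, and it is precisely this subtraction that produces the derivative sums $\Omega^{(j)}(\ell)/j!$ appearing in $\nu(\fol,V,W)$. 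Your stated reasons for leaving it uncomputed also conflict with the setup: after the deformation of Lemma \ref{lema_Theorem1} the foliation \emph{is} special along $W$ when $\ell>0$, so its singularities on $E$, hence on $\widetilde{V}_E$, are isolated and Baum--Bott applies; and the deformation preserves invariance of $V$ exactly, by item (vi) of that lemma. Finally, you omit the case $\ell=0$, in which the deformation makes $W$ invariant rather than special and the comparison runs on $V$ itself via Proposition \ref{casel0}, with no blowup at all; the paper treats the two cases separately, as in the proof of Theorem \ref{theorem1}.
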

\par The proof of Theorem \ref{principal} is similar to the proof of Theorem \ref{theorem1}. More precisely, we construct a foliation
by curves $\fol_t$ of $\pn$ that is a deformation of $\fol$ such
that $V$ is invariant and either $W$ is invariant by $\fol_t$ or
$\fol_t$ is special along $W$, with some isolated singularities.
Furthermore, we will show that the total sum of Milnor numbers of
$\fol_t$ associated to each singularity is independent of the
parameter $t$.

\section{Preliminaries}

\par In this section we focus on some basic definitions and known results of holomorphic foliations by curves on $\pn$.
 \subsection{Milnor number of an isolated singularity}
Let $\fol$ be a holomorphic foliation by curves on $\pn$ and $p$ a
singular point of $\fol$. Without loss of generality we can suppose
that $p=0\in\C^n$ and $\fol$ is defined, in a neighborhood of $p$,
by a polynomial vector field
$X_p=\displaystyle\sum^{n}_{j=1}P_{j}(z)\frac{\partial}{\partial{z_{n}}},$ where
$P_i\in\C[z_1,\ldots,z_n]$. The \textit{Milnor number} of $\fol$ at
$p$, denoted by $\mu(\fol,p)$, is the $\C$-dimension of the local
algebra associated to $X_p$,
$$\mu(\fol,p)=\dim_{\mathbb{C}}\frac{\mathcal{O}_{n,0}}{\displaystyle \langle P_1,\ldots,P_n\rangle}$$
where $\mathcal{O}_{n,0}$ is the ring of germs of holomorphic
functions at $0\in\C^n$.  It is well-known that this definition is
independent of the vector field representing $\fol$ at $p$.
\par We say that $\fol$ is \textit{non-degenerate} on $\pn$ if $\fol$ has only isolated singularities and the Milnor number of a vector field defining the
foliation around each singularity is 1. The Jouanolou's foliation $\mathcal{J}_k$ of degree $k\geq 1$, given by the vector field  in affine coordinates $(z_1,\ldots,z_n)$
\begin{equation}\label{jouanolou}
\sum_{i=1}^{n}\Big(\sum^{k}_{j=1}z_iz_1^{k}-z^{k}_{i+1}\Big)\frac{\partial}{\partial{z}_i}+(z_nz_1^k-1)\frac{\partial}{\partial{z}_n}
\end{equation}
 is an example of a non-degenerate foliation on $\pn$. Now if $\fol$ is non-degenerate foliation on $\pn$ then it follows from the classical Baum-Bott formula \cite{BB} that $$\sum_{p\in\pn}\mu(\fol,p)=k^n+k^{n-1}+\ldots+k+1.$$

\subsection{Multiplicity along subvarieties}
We describe the multiplicity of $\F$ along an irreducible variety
$W$ of pure codimension $\cod(W)=d$, with $d\geq 2$.  From now on we
make the assumption $W\subset\singf$. By a holomorphic change of
coordinates, $W$ can be locally given as $\{z_1=\ldots=z_{d}=0\}$.
In this neighborhood, $\fol$ is described by the vector field
\begin{equation}
\label{fol} \ddd_{\fol}=\dps P_1(z)\frac{\partial}{\partial
z_1}+\ldots+\dps P_n(z)\frac{\partial}{\partial z_n}.
\end{equation}
Therefore, one may write the local sections as
\begin{equation}
\label{for40}
        P_i(z) = \dps\sum_{|a|=m_i}z_1^{a_1}\cdots z_{d}^{a_{d}}P_{i,a}(z)
\end{equation}
where $a:=(a_1,\ldots,a_{d})\in \Z^d$ with $|a|:=a_1+\ldots+a_{d}$,
$a_i\ge0$ and at least one among the $P_{i,a}(z)$ does not vanish at
$\{z_{1}=\ldots=z_{d}=0\}$. We define the multiplicity of each
function $P_i$ along $W$ as $m_{W}(P_i)=m_i$. As a consequence, the
\textit{multiplicity of $\fol$ along $W$} is defined as
\begin{equation}
\label{eqummm}
m_{W}(\fol)={\rm min}\{m_1,\ldots,m_n\}.
\end{equation}

\par In order to simplify our analysis, we use the following Lemma:
\begin{lemma}\label{lemafp} Let $\fol$ be a holomorphic foliation by curves,
defined in the neighborhood of $p$ by the polynomial vector field
$$ X_p = \displaystyle\sum^{n}_{j=1}P_{j}(z)\frac{\partial}{\partial{z_{j}}}$$
where $m_i=m_{W}(P_i)$. Then, by a linear change of coordinates,
$\fol$ may be described by the polynomial vector field

$$ Y_p = \displaystyle\sum^{n}_{j=1}Q_{j}(w)\frac{\partial}{\partial{z_{j}}}$$
with $$m_{W}(Q_j)=\left\{\begin{array}{ll}
                            m^{\prime}_{W}(\fol),&{\rm for}\quad j=1,\ldots,d\cr
                            m_{W}(\fol),&{\rm for}\quad j=d+1,\ldots,n
                            \end{array}\right.$$
where $m^{\prime}_{W}(\fol)={\rm min}\{m_1,\ldots,m_d\}$.
\end{lemma}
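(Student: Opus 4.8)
The plan is to realise the change of coordinates as an invertible linear map $w=Az$ that preserves the local equations $W=\{z_1=\cdots=z_d=0\}$, and then to check that a \emph{generic} such map distributes the minimal multiplicities exactly as stated. Preserving the ideal $I_W=(z_1,\ldots,z_d)$ of $W$ forces $A$ to be block lower-triangular,
$$
A=\begin{pmatrix} A_{11} & 0\\ A_{21} & A_{22}\end{pmatrix},\qquad A_{11}\in GL_d(\C),\ A_{22}\in GL_{n-d}(\C),
$$
with vanishing upper-right block. Under such a change the components transform by $Q_i(w)=\sum_{j}A_{ij}\,P_j(A^{-1}w)$; since $A^{-1}$ is again block lower-triangular it preserves $I_W$, so $m_W\big(P_j(A^{-1}w)\big)=m_W(P_j)=m_j$. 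Thus each $Q_i$ is a constant-coefficient linear combination of functions of known multiplicity along $W$.

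First I would isolate the elementary fact that governs everything. Writing $\operatorname{in}_W(f)$ for the homogeneous part of degree $m_W(f)$ in $z_1,\ldots,z_d$ of $f$ (a nonzero form with coefficients in $\mathcal{O}_W$, as in \eqref{for40}), a finite family $g_1,\ldots,g_r$ with $\min_j m_W(g_j)=m$ satisfies $m_W\big(\sum_j c_j g_j\big)=m$ for every $(c_j)$ outside the proper linear subspace on which $\sum_{\,m_W(g_j)=m} c_j\,\operatorname{in}_W(g_j)$ vanishes. Indeed $\sum_j c_j g_j$ always has multiplicity $\ge m$, with equality precisely when its degree-$m$ leading form is nonzero, and that form is the indicated combination of the $\operatorname{in}_W(g_j)$, at least one of which is nonzero.

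With this in hand the two blocks are handled separately. For $i=1,\ldots,d$ the vanishing of $A_{12}$ means $Q_i$ mixes only $P_1(A^{-1}w),\ldots,P_d(A^{-1}w)$, whose multiplicities have minimum $m'_W(\fol)$; hence $m_W(Q_i)\ge m'_W(\fol)$ in all cases, with equality once the corresponding row of $A_{11}$ avoids finitely many linear conditions. For $i=d+1,\ldots,n$ the blocks $A_{21},A_{22}$ let $Q_i$ involve all of $P_1,\ldots,P_n$, whose overall minimum multiplicity is $m_W(\fol)$, so a generic row of $(A_{21}\mid A_{22})$ yields $m_W(Q_i)=m_W(\fol)$. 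The block structure is doing real work here: $A_{12}=0$ prevents the first $d$ components from dropping to the smaller global minimum, while a nonzero $A_{21}$ is exactly what lets the last $n-d$ components reach $m_W(\fol)$ when that minimum is attained among the first $d$ sections.

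To conclude I would assemble the genericity argument on the irreducible variety of block lower-triangular matrices with $A_{11},A_{22}$ invertible: each of the finitely many conditions ``$m_W(Q_i)$ is too large'' and ``$\det A_{kk}=0$'' defines a proper Zariski-closed subset, so their union is proper and a single $A$ realising all the prescribed multiplicities simultaneously exists. The only delicate point is the bookkeeping ensuring both that the first block never falls below $m'_W(\fol)$ and that the second block genuinely attains $m_W(\fol)$ even when the global minimum lives among the first $d$ sections; this is precisely what the lower-triangular shape encodes, and I expect it to be the main, though rather mild, obstacle.
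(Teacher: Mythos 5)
Your proposal is correct and takes essentially the same approach as the paper: a block lower-triangular change of coordinates $w=Az$ (upper-right block zero so that the ideal of $W$ is preserved, with $A^{-1}$ of the same form), under which the first $d$ components become linear combinations of $P_1,\ldots,P_d$ and the last $n-d$ components combinations of all the $P_j$, the desired multiplicities being attained for a generic choice of coefficients. The only difference is one of detail: the paper simply asserts the genericity (``adjusting the coefficients $a_{ij}$, if necessary''), while you justify it via initial forms along $W$ and a Zariski-openness argument on the space of block lower-triangular invertible matrices.
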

\begin{proof} In fact, it is enough to consider the linear
transformation $w=Az$ where $A=(a_{ij})\in GL(n,\C)$ with $a_{ij}=0$
for $i=1,\ldots,d$ and $j=d+1,\ldots,n$. In this way, $B=(b_{ij}) =
A^{-1}$ has the same properties, i.e.; the subspace $W$ given by
$\{z_1=\ldots=z_d=0\}$ is an invariant set under this
transformation. Adjusting the coefficients $a_{ij}$, if necessary,
we can admit that
$$ \dot w_i=\left\{\begin{array}{ll}
    \dps \sum_{j=1}^{d}a_{ij}\dot z_j=\dps\sum_{j=1}^{d}a_{ij}P_j(Bw)=Q_i(w),& \mathrm{ for }\quad
    i=1,\ldots,d\\
    \dps \sum_{j=1}^{n}a_{ij}\dot z_j=\dps\sum_{j=1}^{n}a_{ij}P_j(Bw)=Q_i(w),& \mathrm{ for }\quad
    i=d+1,\ldots,n
    \end{array}\right.$$
having the required properties.
\end{proof}
\par

\subsection{Blowup of subvarieties}
We start by recalling the blowup procedures of $\pn$ along $W$ and
describe the behavior of $\fol$ under this transformation. See
\cite{GH} for more details. Let $\Delta$ be an $n$-dimensional
polydisc with holomorphic coordinates $z_1,\ldots,z_n$ and
$\Gamma\subset \Delta$ the locus $z_1=\ldots=z_{d}=0$. Take
$[y_1:\ldots:y_{d}]$ to be homogeneous coordinates on $\P^{d-1}$.
The blowup of $\Delta$ along $\Gamma$ is the smooth variety
$$
\DC = \lbrace (z,[y])\in \Delta \times \P^{d-1}\, |\, z_i
y_j=z_jy_i\ {\rm for}\ 1\le i,j \le d\rbrace.
$$
The projection $\pi : \DC \rightarrow \Delta$ on the first factor is
an isomorphism away from $\Gamma$, while the inverse image of a
point $z\in \Gamma$ is at projective space $\P^{d-1}.$ The inverse
image $E=\pi^{-1}(\Gamma)$ is the exceptional divisor of the blowup.

The standard open cover $U_j = \lbrace [y_1:\ldots:y_{d}] \,|\,
y_j\ne 0\rbrace$, with $1\leq j\leq d$, of $\P^{d-1}$ yields a cover
of $\widetilde{\Delta}$ where each open set, for $1\leq j\leq d$, is
defined by
\begin{equation}
\label{for100} {\widetilde{U}_j}= \lbrace(z,[y]) \in \DC\ \, |\ \,
[y]\in U_j\rbrace
\end{equation}
with holomorphic coordinates $\sigma_j(u_1,\ldots,u_n)=(z_1,\ldots,z_n)$ given by
$$z_i=
\begin{cases}
u_i & {\rm if}\ i=j\ \mbox{or}\ i=d+1,\ldots,n \\
u_i u_j & {\rm if}\ i=1,\ldots,{\widehat{j}},\ldots,d.
\end{cases}
$$
The coordinates $u\in \C^n$ are affine coordinates on each fiber
$\pi^{-1}(p)\cong\P^{d-1}$ of $E$.

Now consider the algebraic subvariety $W\subset \pn$. Let $\lbrace
\phi_{\lambda},U_{\lambda}\rbrace$ be a collection of local charts
covering $W$ and $\phi_{\lambda}: U_{\lambda}\to \Delta_{\lambda},$
where $\Delta_{\lambda}$ is an $n$-dimensional polydisc. One may
suppose that $\Gamma_{\lambda}=\phi_{\lambda}(W)\cap U_{\lambda}$ is
given by $z_1=\ldots=z_{d}=0.$ Let $\pi_{\lambda}: \DC_{\lambda}\to
\Delta_{\lambda}$ be the blowup of $\Delta_{\lambda}$ along
$\Gamma_{\lambda}$. One can patch the $\pi_{\lambda}$ together and
use the chart maps $\phi_{\lambda}$ to get a blowup
$\widetilde{\P}_n$ of $\pn$ along $W$ and a blowup morphism $\pi :
\widetilde{\P}_n\to \pn$. The exceptional divisor $E$ is a fibre
bundle over $W$ with fiber $\P^{d-1}$, which is naturally identified
with the projectivization $\P(N_{W/_{\pn}})$ of the normal bundle
$N_{W/_{\pn}}$.
\subsection{Nondicritical and Dicritical singularities}
Let us denote by $\widetilde{\fol}$ the strict transform of $\fol$
under $\pi$, that is, $\widetilde{\fol}$ is a foliation by curves on
$\widetilde{\pn}$ whose leaves are projected via $\pi$ in to leaves
of $\fol$. We say that $W$ is \textit{nondicritical} if the
exceptional divisor $E$ is invariant by $\widetilde{\fol}$,
otherwise $W$ is \textit{dicritical}.

Now, by Lemma \ref{lemafp}, we can assume that the foliation $\fol$
is described by vector field $ \ddd_{\fol}
=\dps\sum^{n}_{j=1}P_{j}(z)\frac{\partial}{\partial{z_{j}}}$ such
that
$$q_j=m_{W}(P_j)=
\begin{cases}
m_1 & {\rm for}\ j=1,\ldots,d \\
m_n & {\rm for}\ j=d+1,\ldots,n
\end{cases}
$$
with $m_1\ge m_n$. In order to simplify our notation, we only
consider on the chart $\widetilde{U}_1$ with the coordinate system
given by
$\sigma_1(u)=(u_1,u_1u_2,\ldots,u_1u_d,u_{d+1},\ldots,u_n)=z\in
\C^n$. Thus, for $j\in \{1,d+1,\ldots,n\}$, we have that

\begin{align*}
\dot u_j &= P_j(\sigma_1(u))=\dps \sum_{|a|=q_j} u_1^{a_1}(u_1u_2)^{a_2}\cdots(u_1u_{d})^{a_{d}}P_{j,a}(\sigma_1(u)) \\
 & = u_1^{m_j}\dps\sum_{|a|=q_j}u_2^{a_2}\cdots u_{d}^{a_{d}}
 P_{j,a}(\sigma_1(u)).
\end{align*}
But, $$P_{j,a}(\sigma_1(u))=P_{j,a}(0,\ldots,0,u_{d+1},\ldots,u_n) +
u_1{\widetilde P}_{j,a}(u)= p_{j,a}(u_{d+1},\ldots,u_n)+ u_1
\widetilde{P}_{j,a}(u)$$ and hence
\begin{equation}
\label{equ1en} \dot u_j = u_1^{q_j} \left( \dps
\sum_{|a|=q_j}u_2^{a_2}\cdots u_{d}^{a_{d}}
p_{j,a}(u_{d+1},\ldots,u_n) + u_1
\widetilde{P_j}(u)\right):=u_1^{q_j}\left(Q_j(u)+u_1\widetilde{P_j}(u)\right)
\end{equation}
for some functions $\widetilde{P_j}$.

Finally, for $j\in\{2,\ldots,d\}$, since $z_j=u_1u_j$ we obtain that

\begin{equation}
\label{equ3en} \dot u_j = P_j(\sigma_1(u))=u_1^{m_1-1} \left( G_j(u)+
u_1 \widetilde{P_j}(u)\right)
\end{equation}
where
\begin{equation}
\label{equ4en} G_j(u):=\dps \sum_{|a|=m_1}u_2^{a_2}\cdots
u_{d}^{a_{d}} \big(p_{j,a}(u_{d+1},\ldots,u_n) -u_j\cdot
p_{1,a}(u_{d+1},\ldots,u_n)\big):=Q_j(u)-u_jQ_1(u)
\end{equation}
for some functions $\widetilde{P_j}$. With the equations
(\ref{equ1en}), (\ref{equ3en}) and (\ref{equ4en}), we obtain the
expressions of $\pi^*\fol$ (in a similar way with the others charts
$\widetilde{U}_j$),

\begin{align}
\label{equ5en} \ddd_{\pi^*\fol}=&\dps
u_1^{m_1}[Q_1(u)+u_1\widetilde{P_1}(u)]\frac{\partial}{\partial u_1}
+
u_1^{m_1-1}\sum_{j=2}^{d}[G_j(u)+u_1\widetilde{P_j}(u)]\frac{\partial}{\partial
u_j}+\cr
&u_1^{m_n}\sum_{j=d+1}^{n}[Q_j(u)+u_1\widetilde{P_j}(u)]\frac{\partial}{\partial
u_j}.
\end{align}
The vector field that generates $\widetilde{\fol}$ is obtained from
(\ref{equ5en}), after a division by an adequate power of $u_1$. The
situation where $W$ is a nondicritical component is described in the
following cases

\indent (i) $m_n + 1 = m_1$ and $G_{j_0}\not\equiv 0$ for some
$j_0\in\{2,\ldots,d\}$.

\noindent In this condition, dividing (\ref{equ5en}) by $u_1^{m_n}$
we get
\begin{align}
\label{equ6en} \ddd_{\widetilde{\fol}}=&\dps
u_1\big(Q_1(u)+u_1\widetilde{P_1}(u)\big)\frac{\partial}{\partial
u_1} +
\sum_{j=2}^{d}\big(G_j(u)+u_1\widetilde{P_j}(u)\big)\frac{\partial}{\partial
u_j}+\cr
&\sum_{j=d+1}^{n}\big(Q_j(u)+u_1\widetilde{P_j}(u)\big)\frac{\partial}{\partial
u_j}.
\end{align}\\
If there is some $G_{j_i}\equiv0$ then the set
$\sing(\widetilde{\fol})\cap E$ contains some component of positive
dimension. Furthermore, in order to $\fol$ be special along $W$ then
necessary we have that $G_j\not\equiv0$, $ \forall j$.

\indent(ii) $m_n + 1 < m_1$.

\noindent As in the case before, dividing (\ref{equ5en}) by
$u_1^{m_n}$ we get
\begin{align}
\label{equ7en} \ddd_{\widetilde{\fol}}=&\dps
u_1^{m_1-m_n}\big(Q_1(u)+u_1\widetilde{P_1}(u)\big)\frac{\partial}{\partial
u_1} +
u_1^{m_1-m_n-1}\sum_{j=2}^{d}\big(G_j(u)+u_1\widetilde{P_j}(u)\big)\frac{\partial}{\partial
u_j}+\cr
&\sum_{j=d+1}^{n}\big(Q_j(u)+u_1\widetilde{P_j}(u)\big)\frac{\partial}{\partial
u_j}.
\end{align}
In this situation, the leaves of $\widetilde{\fol}$ when restricted
on the exceptional divisor $E$ are contained in the hyperplane
$u_j=k_j$, for $j\in\{2,\ldots,d\}$. Furthermore, always there exist
non-isolated singularities of $\widetilde{\fol}$ on $E$.

\indent(iii) $m_n = m_1$ and $G_{j_0}\not\equiv 0$ for some
$j_0\in\{2,\ldots,d\}$.

\noindent We may divide (\ref{equ5en}) by $u_1^{m_1-1}$. As
consequence, we obtain
\begin{align}
\label{equ8en} \ddd_{\widetilde{\fol}}=&\dps
u_1\big(Q_1(u)+u_1\widetilde{P_1}(u)\big)\frac{\partial}{\partial
u_1} +
\sum_{j=2}^{d}\big(G_j(u)+u_1\widetilde{P_j}(u)\big)\frac{\partial}{\partial
u_j}+\cr
&u_1^{m_n-m_1+1}\sum_{j=d+1}^{n}\big(Q_j(u)+u_1\widetilde{P_j}(u)\big)\frac{\partial}{\partial
u_j}.
\end{align}
Now, the leaves of $\widetilde{\fol}$ on $E$ are contained in the
hyperplane $u_j=k_j$ for $j\in\{d+1,\ldots,n\}$.
\par On the other hand, when $W$ is dicritical, we have the following
condition:\\
\noindent(i) $m_n = m_1$ and $G_j\equiv0$ for $j\in\{2,\ldots,d\}$.

\noindent Therefore, after the division of (\ref{equ5en}) by
$u_1^{m_1}$ we get
\begin{align}
\label{equ9en} \ddd_{\pi^*\fol}=\dps
\big(Q_1(u)+u_1\widetilde{P_1}(u)\big)\frac{\partial}{\partial u_1}
+ \sum_{j=2}^{d}\big(\widetilde{P_j}(u)\big)\frac{\partial}{\partial
u_j}+
\sum_{j=d+1}^{n}\big(Q_j(u)+u_1\widetilde{P_j}(u)\big)\frac{\partial}{\partial
u_j}.
\end{align}
Note that the exceptional divisor $E$ is not an invariant set by
$\widetilde{\fol}$.

\begin{lemma}\label{lemlll}
With the notation given in the Lemma \ref{lemafp}, we get
\begin{enumerate}
\item $m_{W}(\fol)={\rm min}\{m_1,m_n\}$;
\item Let $\ll_{\fol}$ and $\ll_{\widetilde{\fol}}$ be tangent sheaves of the foliations $\fol$ and $\widetilde{\fol}$, respectively.  If $\ell$ is the integer such that
$$
\ll_{\widetilde{\fol}} \cong \pi^*\ll_{\fol} \otimes \oo_{\widetilde{Y}}(\ell E)
$$
then
\begin{equation*}
\ell=
\begin{cases}
{\rm min}\lbrace m_1-1, m_n \rbrace & \mbox{ if } {W \mbox{ is nondicritical}} \\
m_1& \mbox{ if } {W \mbox{ is dicritical. }}
\end{cases}
\end{equation*}
In particular, if $m_1=m_n$ then
\begin{equation*}
\ell=
\begin{cases}
m_{W}(\fol)-1& \mbox{ if } {W \mbox{ is nondicritical}} \\
m_{W}(\fol)& \mbox{ if } {W \mbox{ is dicritical. }}
\end{cases}
\end{equation*}

The number $\ell:=m_{E}(\folt,W)$ will be called the order of
vanishing of $\folt$ at $E$.
\end{enumerate}

\end{lemma}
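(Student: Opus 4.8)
The plan is to prove the two items separately, reading all the needed information off the pulled-back field in (\ref{equ5en}). For item (1), I would first note that the multiplicity $m_W(\fol)$ defined in (\ref{eqummm}) is intrinsic: it is unchanged by the linear change of coordinates provided by Lemma \ref{lemafp}, since that change fixes $W=\{z_1=\cdots=z_d=0\}$ and only forms $\C$-linear combinations of the $P_j$, an operation that cannot lower the order of vanishing below the minimum of the orders it combines. After Lemma \ref{lemafp} the $n$ multiplicities collapse to two values, $m_1=m'_{W}(\fol)$ on the first $d$ components and $m_n=m_W(\fol)$ on the last $n-d$, with $m_1\ge m_n$. Hence the minimum over all $n$ components equals $\min\{m_1,m_n\}$, which is exactly the assertion of item (1).

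For item (2) the key preliminary step is to identify $\ell$ with the order of vanishing of $\pi^*\fol$ along $E$. In the chart $\widetilde U_1$ one has $E=\{u_1=0\}$ with $u_1$ a local equation for $E$, so dividing the field (\ref{equ5en}) by the largest power $u_1^{\ell}$ dividing all of its coefficients yields the generator of $\folt$; since this division twists the line bundle by $\oo_{\pnt}(\ell E)$, the same $\ell$ is the integer in $\ll_{\folt}\cong\pi^*\ll_\fol\otimes\oo_{\pnt}(\ell E)$. I would then inspect the three blocks of (\ref{equ5en}), whose coefficients carry the factors $u_1^{m_1}$, $u_1^{m_1-1}$ and $u_1^{m_n}$; because $m_1\ge m_1-1$, the naive common factor is $u_1^{\min\{m_1-1,m_n\}}$.

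The \emph{main obstacle} is to show that this naive exponent is attained exactly, i.e. that in general no further factor of $u_1$ can be extracted, and to line this up with the dicritical/nondicritical dichotomy. Here I would invoke the three nondicritical subcases already isolated before the statement. When $m_n+1=m_1$ or $m_n+1<m_1$ (equations (\ref{equ6en}) and (\ref{equ7en})) the block of order $u_1^{m_n}$ survives with coefficient $Q_j+u_1\widetilde{P_j}$, whose restriction to $\{u_1=0\}$ is $Q_j$, not identically zero by the definition of the multiplicity; thus $\ell=m_n=\min\{m_1-1,m_n\}$, and the $\partial/\partial u_1$ coefficient vanishes on $\{u_1=0\}$, so $E$ is invariant. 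When $m_n=m_1$ with some $G_{j_0}\not\equiv0$ (equation (\ref{equ8en})) the surviving order is $u_1^{m_1-1}$, giving $\ell=m_1-1=\min\{m_1-1,m_n\}$, again with $E$ invariant. Finally, in the dicritical situation $m_1=m_n$ and $G_j\equiv0$ for all $j$, the middle block $u_1^{m_1-1}(G_j+u_1\widetilde{P_j})$ gains an extra factor of $u_1$, all three blocks become divisible by $u_1^{m_1}$, and after dividing (equation (\ref{equ9en})) the $\partial/\partial u_1$ coefficient equals $Q_1$ on $\{u_1=0\}$, which is generically nonzero, so $E$ is not invariant and $\ell=m_1$. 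This settles both rows of the displayed formula, and the ``in particular'' case $m_1=m_n$ follows by specialization: the nondicritical alternative is subcase (iii), giving $\ell=m_1-1=m_W(\fol)-1$, and the dicritical alternative gives $\ell=m_1=m_W(\fol)$, using item (1). The step demanding the most care is the non-vanishing on $E$ of the surviving leading coefficients $Q_j$ (respectively $G_{j_0}$), which is precisely what forces $\ell$ to equal the minimum rather than exceed it.
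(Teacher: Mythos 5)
Your proof is correct and takes essentially the same route as the paper, which states Lemma \ref{lemlll} without a separate formal proof, as a direct consequence of Lemma \ref{lemafp} and the case analysis of the blown-up vector field in (\ref{equ5en})--(\ref{equ9en}). The extra care you take in checking that the leading coefficients $Q_j$ (respectively $G_{j_0}$, $Q_1$) restrict to functions not identically zero on $E$ --- so that the division by the power of $u_1$ is exact and $\ell$ equals, rather than exceeds, the stated minimum --- is precisely the point the paper leaves implicit.
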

\subsection{Chern Class}
Let $\pi:\pnt\to \pn$, $n \ge 3$, be the blowup of $\P^n$ centered
at $W$, with exceptional divisor denoted by $E$. Set
$N:=N_{W/\P^n}$ and $\rho:=\pi|_{E}$. Since $E \cong \P(N)$,
recall that $A(E)$ is generated as an $A(\calc)$-algebra by the
Chern class
$$
\zeta := c_1(\oo_{N}(-1))
$$
with the single relation
\begin{equation}
\label{equdch} \zeta^{d} -
\rho^*c_1(N)\zeta^{d-1}+\ldots+(-1)^{d-1}\rho^*c_{d-1}(N)\zeta +
(-1)^{d}\rho^*c_{d}(N)=0.
\end{equation}
The normal bundle $N_{E/\pnt}$ agrees with the tautological bundle
$\oo_{N}(-1)$, and hence
\begin{equation}
\label{equzet} \zeta=c_1(N_{E/\pnt}).
\end{equation}
If $\iota :E\hookrightarrow \pnt$ is the inclusion map, we also get
\begin{equation}
\label{equslf} \iota_{*}(\zeta^i)=(-1)^i E^{i+1}.
\end{equation}

Now, from Porteous Theorem (cf. \cite{IP}, \cite{WF}), writing
$c_j(\pnt)$ and $c_j(\pn)$ for $c_j(T_{\pnt})$ and $c_j(T_{\pn})$,
respectively, we have
\begin{equation}
\label{for20} c(\widetilde{\pn}) - \pi^*c(\pn) = \iota_{*}(\rho^*C(T_{W})\cdot \alpha)
\end{equation}
where
$$\alpha = \frac{1}{\zeta}\dps\sum_{i=0}^{d}\big[1-(1-\zeta)(1+\zeta)^i\big]\rho^*c_{d-i}(N).$$
In this way, we can rewrite $\alpha$,
\begin{equation}
\label{alpha1}
\alpha = \dps\sum_{m=0}^{d}\sum_{j=0}^{d-m}\bigg[{{d-m}\choose{j}}-{{d-m}\choose{j+1}}\bigg]\zeta^j\rho^*c_m(N)=\dps\sum_{i=0}^{d}\alpha_i
\end{equation}
where
\begin{equation}
\label{alpha2}
\alpha_i=\sum_{m=0}^{i}\bigg[{{d-m}\choose{i-m}}-{{d-m}\choose{i-m+1}}\bigg]\zeta^{i-m}\rho^*c_m(N).
\end{equation}
Writing $(\rho^*C(W)\cdot \alpha)=\dps\sum_{j=0}^{n}\beta_j$ where
$$\beta_j=\sum_{i=0}^{j}\alpha_i \rho^*c_{j-i}(T_{W}),$$
we get,
\begin{equation}
\label{fcc}
c_j(\pnt)=\pi^*c_j(\pn)+\iota_*(\beta_{j-1}).
\end{equation}
Now, defining $a=(a_1,a_2)\in \Z^2$, with $0\le a_1:=m\le d$, $0\le a_2=j-1-i\le n-d$ and $|a|=a_1+a_2$, we can rewrite (\ref{fcc}) in the following way

\begin{equation}
\label{fcc1}
c_j(\widetilde{\pn}) =\pi^*c_j(\pn)+\dps\sum_{|a|=0}^{j-1}(-1)^{j-|a|-1}\Gamma^{j}_{a}\cdot\rho^*c_{a_1}(N)\rho^*c_{a_2}(T_{W})E^{j-|a|}
\end{equation}
where
\begin{equation}
\label{gamma}
\Gamma^{j}_{a}={{d-a_1}\choose{j-|a|-1}}-{{d-a_1}\choose{j-|a|}}.
\end{equation}
Here we are assuming that
$\dps{{p}\choose{q}}=0$ if $p<q$ or $q<0$.

Now, let $i:W \to \pn$ be a closed embedding of $W$ in $\pn$. From the exact sequence
$$ 0 \to T_{W} \to i^*T_{\pn} \to N \to 0$$
one has
$$c(T_{W}) = c(i^*T_{\pn})/c(N).$$
If $W$ is an intersection of divisors $D_1,\ldots,D_d$, then
$$c(N)=\dps\prod_{j=1}^{d}(1+c_1(i^*\mathcal{O}_{\pn}(D_i)))=\prod_{j=1}^{d}(1+k_j\mathbf{h})=\sum c_i(N)$$
where $\mathbf{h}$ is the class of a hyperplane of $\pn$ and $k_j$
is the $deg(D_j)$. Therefore,
\begin{equation}
\label{ccni} c_i(N)=\sigma_{i}^{(d)}\cdot {\mathbf h}^i.
\end{equation}
From the projection formula one deduces that
$$c(T_{W})= \dps\frac{(1+\mathbf{h})^{n+1}}{\dps\prod_{j=1}^{d}(1+k_j\mathbf{h})}=\sum c_i(T_W).$$
Consequently,
\begin{equation}
\label{ccti} c_i(T_{W}) =\tau_i^{(d)}\cdot \mathbf{h}^i.
\end{equation}

Repeatedly applying the equation (\ref{equdch}), we get
\begin{equation}
\label{zetger} \zeta^{m}=\dps\sum_{i=1}^{d}\rho^*(\beta_{m,i}\cdot
\mathbf{h}^{m-d+i})\zeta^{d-i}
\end{equation}
where
$$\beta_{m,i}=\dps\sum_{j=0}^{i-1}(-1)^{j}\sigma_{j}^{(d)}\cdot\mathcal{W}_{m-d+i-j}^{(d)}$$
for $m\ge d$. In fact, it is clear for $m=d$ because
$$\dps\sum_{j=0}^{m}(-1)^j\sigma_j^{(d)}\cdot\mathcal{W}_{m-j}^{(d)}=0,\quad m>0$$
and $\beta_{d,i}=(-1)^{i-1}\sigma_{i}^{(d)}$. Assuming that the
induction hypothesis is true for $m\ge d$, we get
\begin{align*}
\zeta^{m+1}&=\zeta\cdot\zeta^{m}=\dps\sum_{i=1}^{d}\rho^*(\beta_{m,i}\cdot
\mathbf{h}^{m-d+i})\zeta^{d-i+1}\\
&=\rho^*(\beta_{m,1}\cdot
\mathbf{h}^{m-d+1})\zeta^{d}+\dps\sum_{i=1}^{d-1}\rho^*(\beta_{m,i+1}\cdot
\mathbf{h}^{m-d+i+1})\zeta^{d-i}.\\
\end{align*}
Again, from (\ref{equdch}), we yields
\begin{align*}
\beta_{m+1,i}&=(-1)^{i-1}\sigma_{i}^{(d)}\cdot
\mathcal{W}_{m-d+1}^{(d)}+\beta_{m,i+1}\\
&=\dps\sum_{j=1}^{i-1}(-1)^{j}\sigma_{j}^{(d)}\cdot\mathcal{W}_{m+1-d+i-j}^{(d)}.
\end{align*}
In particular, $\beta_{m,1}=\mathcal{W}_{m-d+1}^{(d)}$ which results
\begin{equation}
\label{cce}
\dps\int_{E}\rho^*c_{i}(N)\zeta^{n-i-1}=\left\{\begin{array}{ll}
                                                           (-1)^{d-1}\sigma_{i}^{(d)}\mathcal{W}_{n-d-i}^{(d)}\deg(W),&0\le i\le n-d\cr
                                                           0,&i>n-d.
                                                   \end{array}\right.
\end{equation}
\subsection{Embedded closed points}
\par Let $\fol ol(\pn;k)$ be the space of holomorphic foliations by curves of degree $k\geq 1$ of $\pn$.
Denote by $N(\pn;k)\subset\fol ol(\pn;k)$ the set of non-degenerate
foliations of degree $k$. Since the Jouanolou's foliation given in
(\ref{jouanolou}) is non-degenerated, we have that $N(\pn;k)$ is not
empty. The following result guarantees that a foliation by curves
with singularities of positive dimension can be deformed to a
non-degenerated foliation of $\pn$.
\begin{theorem}[Soares \cite{bound}]\label{marcio-deformation}
 $N(\pn;k)$ is open, dense and connected in $\fol ol(\pn;k)$.
 \end{theorem}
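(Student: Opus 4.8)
The plan is to realize $\fol ol(\pn;k)$ as (an open dense subset of) a projective space and to identify $N(\pn;k)$ with the complement of a proper algebraic subvariety, from which openness, density and connectedness all follow simultaneously. First I would recall that a degree-$k$ foliation by curves is a nonzero morphism $\Phi:\mathcal{O}_{\pn}(1-k)\to T\pn$, equivalently a nonzero global section of $T\pn(k-1)$, taken up to scalar and subject to $\cod\,\sing(\fol)\ge 2$. Thus $\fol ol(\pn;k)$ is a Zariski-open subset of the projective space $\mathbb{P}\big(H^0(\pn,T\pn(k-1))\big)$; in particular it is irreducible, and hence connected, in both the Zariski and the Euclidean topology.

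Next I would reinterpret non-degeneracy geometrically: a foliation $\fol$ lies in $N(\pn;k)$ precisely when the corresponding section $s$ of the rank-$n$ bundle $T\pn(k-1)$ is transverse to the zero section. Indeed, transversality at a zero $p$ means the intrinsic derivative of $s$ is an isomorphism, which in a local trivialization $s=(P_1,\dots,P_n)$ is the invertibility of the Jacobian, i.e.\ $\mu(\fol,p)=1$; moreover transversality forces the zero scheme to have the expected codimension $n\ge 2$, so the singularities are automatically isolated and the foliation condition $\cod\,\sing(\fol)\ge 2$ holds. The core step is then to show that transverse sections are generic. Here I would invoke that $T\pn(k-1)$ is globally generated for every $k\ge1$ (it is the twist by the globally generated line bundle $\mathcal{O}(k-1)$ of $T\pn$, itself globally generated by the Euler sequence) and apply the Bertini--Kleiman transversality theorem: for a globally generated bundle of rank equal to $\dim\pn$, the general global section meets the zero section transversally. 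This exhibits the transverse sections as a nonempty Zariski-open subset, so $N(\pn;k)$ is Zariski-open in $\fol ol(\pn;k)$.

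It remains to see that the degeneracy locus $\Sigma:=\fol ol(\pn;k)\setminus N(\pn;k)$ is \emph{proper}. This is immediate from the nonemptiness of $N(\pn;k)$, furnished concretely by the Jouanolou foliation $\mathcal{J}_k$ of \eqref{jouanolou}: its singularities are isolated with Milnor number $1$, so $\mathcal{J}_k\in N(\pn;k)$ (one checks this directly, or notes that the Baum--Bott count $\sum_p\mu=\sum_{i=0}^n k^i$ together with the existence of exactly that many distinct zeros forces each to be simple). Hence $\Sigma$ is a proper Zariski-closed subset of the irreducible variety $\fol ol(\pn;k)$, and therefore $N(\pn;k)$ is open and Zariski-dense, so a fortiori dense in the Euclidean topology. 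Connectedness then follows from the standard principle that the complement of a proper Zariski-closed subset of an irreducible complex variety is connected in the analytic topology: removing from the connected smooth locus an analytic subset of real codimension $\ge 2$ cannot disconnect it.

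I expect the main obstacle to be precisely the transversality/genericity step. The delicate point is that transverse sections must be shown to be generic \emph{within the foliation family}, not merely under arbitrary perturbations of a section of $T\pn(k-1)$; this is exactly what global generation of $T\pn(k-1)$ supplies, by making the universal evaluation map a submersion onto the total space. The only other point requiring care is the bookkeeping guaranteeing that a general transverse section still defines a bona fide foliation, i.e.\ has a codimension-two singular scheme, which the expected-dimension count for transverse zeros provides automatically.
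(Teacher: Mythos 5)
The paper offers no proof of this statement to compare against: Theorem~\ref{marcio-deformation} is imported as a black box from Soares \cite{bound} (the result is classical, going back to Lins Neto--Soares \cite{lins}) and is only used to motivate the explicit deformations of Lemma~\ref{lema_Theorem1}. Judged on its own merits, your argument is correct, and it is in substance the standard proof found in that literature: realize $\fol ol(\pn;k)$ as a Zariski-open subset of $\mathbb{P}\big(H^0(\pn,T\pn(k-1))\big)$, identify $N(\pn;k)$ with the locus of sections transverse to the zero section (Milnor number $1$ at a zero is precisely invertibility of the intrinsic derivative, and transversality automatically forces isolated zeros, hence $\cod\,\sing\geq 2$), show this locus is the complement of a proper Zariski-closed subset, and then read off openness, density and connectedness from irreducibility of projective space. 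Your packaging of the genericity step --- global generation of $T\pn(k-1)$ via the Euler sequence plus the Bertini-type generic transversality theorem, with the Jouanolou foliation \eqref{jouanolou} as an explicit witness of nonemptiness --- is the clean modern route; the cited source reaches the same conclusion by exhibiting the degeneracy locus as algebraic and invoking Jouanolou's example. Two points that you assert would deserve a sentence each in a complete write-up, though both are routine: (a) Zariski-openness of $\fol ol(\pn;k)$ itself, i.e., that the sections whose zero scheme contains a divisor form a closed subset of the projective space of sections; and (b) openness of the transverse locus, which follows because the critical locus of the projection from the incidence variety $Z=\{(s,p)\,:\,s(p)=0\}$ is closed and this projection is proper, $\pn$ being compact. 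Note also that generic smoothness already yields nonemptiness of $N(\pn;k)$ without any explicit example, so Jouanolou's foliation is a convenience rather than a logical necessity in your scheme.
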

\par We note that this result can not be used to determine the \textit{Milnor numbers}  of an irreducible component of $\sing(\fol)$,
because the deformation is not explicit. In order to prove Theorem
\ref{theorem1}, we use the notion of \textit{embedded closed points
associated to $W$}. In fact, we will see that it is possible to
deform $\fol$ to a foliation by curves $\fol_t$ on $\pn$ (for some
small $0<|t|<\epsilon$) such that, either $W$ is $\fol_t$-invariant
or else $\fol_t$ is special along $W$ (see for instance Lemma
\ref{lema_Theorem1}). Therefore, these isolated singularities that
appear after this deformation  and do not lie in $W$ will be called
\textit{embedded closed points associated to $W$}. More precisely,
the set of embedded closed points of $W$ is
\begin{equation}
A_{W}:=\{p^{t}_{j}\in\sing(\fol_t): \displaystyle\lim_{t\to
0}p^{t}_{j}\in W,\quad\text{such that}\quad p^{t}_{j}\not\in W\quad
\forall t\ne 0\} .
\end{equation}
\par Let $X\subset\pn$ be a non-empty subset. Here and subsequently $N(\fol,X)$ denotes the number of isolated singularities of $\fol$ on $X$ (counted with multiplicity). Hence, $$N(\fol,A_{W})=\sum_{q^{t}_{j}\in A_{W}}\mu(\fol_t,q^{t}_{j}).$$
\par  Now, suppose that $V\subset\pn$ is a smooth irreducible complete intersection defined by $$V=Z(F_{1},\ldots,F_{n-m})$$ where $F_{l}\in\C[z_{0},\ldots,z_{n}]$ is homogeneous of degree $d_l$, $1\leq l\leq n-m$. In the proof of Theorem \ref{principal}, we will use the following result due to Soares in
\cite{soares}.
\begin{proposition}\label{result}
Assume that $V$ is invariant by $\fol$, where $\fol$ is a foliation
by curves of $\pn$ of degree $k$, non-degenerate along $V$. Then the
number of singularities of $\fol$ in $V$ is
$$N(\fol,V)=(d_1\ldots d_{n-m})\sum^{m}_{j=0}\left [ \sum^{j}_{\delta=0}(-1)^\delta\mathcal{W}^{(n-m)}_{\delta}(d_1-1,\ldots,d_{n-m}-1) \right ]k^{m-j},$$ where $\mathcal{W}^{(n-m)}_{\delta}$ is the Wronski of degree $\delta$ and $n-m$ variables.
\end{proposition}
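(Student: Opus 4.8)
The plan is to reduce the count to the Poincar\'e--Hopf (top Chern) number of the foliation \emph{restricted} to $V$, and then to evaluate that Chern number by the same characteristic-class bookkeeping developed for $W$ in the Preliminaries.

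First I would restrict the foliation. A degree-$k$ foliation by curves on $\pn$ is a global section $s$ of $T\pn\otimes\mathcal{O}_{\pn}(k-1)$ (this is the morphism $\Phi$ of the Introduction). Since $V$ is invariant by $\fol$, the defining vector field is everywhere tangent to $V$ along $V$, so $s|_V$ factors through the subbundle $TV\otimes\mathcal{O}_V(k-1)\subset\bigl(T\pn\otimes\mathcal{O}_{\pn}(k-1)\bigr)|_V$; that is, $s|_V$ is a global section of $TV\otimes\mathcal{O}_V(k-1)$. Its zero locus is exactly $\sing(\fol)\cap V$, and the hypothesis that $\fol$ is non-degenerate along $V$ guarantees these zeros are isolated, with the local Milnor number $\mu(\fol,p)$ equal to the local index of the section. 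Hence, since $V$ is a smooth projective variety of dimension $m$, evaluating the top Chern class of the bundle in which the section lives gives
\[
N(\fol,V)=\sum_{p\in V}\mu(\fol,p)=\int_V c_m\bigl(TV\otimes\mathcal{O}_V(k-1)\bigr).
\]

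Second, I would compute the total Chern class of $TV$ exactly as was done for $W$, but with $V$ in place of $W$. From the Euler sequence one has $c(T\pn)=(1+\mathbf{h})^{n+1}$, and since $V=Z(F_1,\dots,F_{n-m})$ is a smooth complete intersection its normal bundle splits as $N_{V/\pn}=\bigoplus_{l=1}^{n-m}\mathcal{O}_V(d_l)$, whence $c(N_{V/\pn})=\prod_{l=1}^{n-m}(1+d_l\mathbf{h})$. The conormal sequence then gives
\[
c(TV)=\frac{(1+\mathbf{h})^{n+1}}{\prod_{l=1}^{n-m}(1+d_l\mathbf{h})}\Big|_V,
\qquad\text{so}\qquad
c_j(TV)=\tau_j^{(n-m)}\,\mathbf{h}^{j},
\]
in complete analogy with (\ref{ccti}), where $\tau_j^{(n-m)}=\tau_j^{(n-m)}(d_1,\dots,d_{n-m})$ is the class introduced before Theorem~\ref{principal}. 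Applying the splitting-principle twist formula $c_m(E\otimes L)=\sum_{j=0}^{m}c_j(E)\,c_1(L)^{m-j}$ with $E=TV$, $L=\mathcal{O}_V(k-1)$ and $c_1(L)=(k-1)\mathbf{h}$ yields
\[
c_m\bigl(TV\otimes\mathcal{O}_V(k-1)\bigr)=\Bigl(\sum_{j=0}^{m}\tau_j^{(n-m)}(k-1)^{m-j}\Bigr)\mathbf{h}^{m}.
\]
Integrating and using that $\int_V\mathbf{h}^{m}=\deg V=d_1\cdots d_{n-m}$ produces $N(\fol,V)=(d_1\cdots d_{n-m})\sum_{j=0}^{m}\tau_j^{(n-m)}(k-1)^{m-j}$.

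Third and finally, I would identify this with the expression in the statement. Writing $\tau_j^{(n-m)}=[x^j]\,(1+x)^{n+1}\prod_l(1+d_lx)^{-1}$ and summing the geometric factor, my expression equals $d_1\cdots d_{n-m}$ times the coefficient $[x^m]\,(1+x)^{n+1}\bigl/\bigl((1-(k-1)x)\prod_l(1+d_lx)\bigr)$, whereas the displayed right-hand side is $d_1\cdots d_{n-m}$ times $[x^m]\,1\bigl/\bigl((1-kx)(1-x)\prod_l(1+(d_l-1)x)\bigr)$, because $\sum_{\delta=0}^{j}(-1)^\delta\mathcal{W}_\delta^{(n-m)}(d_1-1,\dots,d_{n-m}-1)=[x^j]\,(1-x)^{-1}\prod_l(1+(d_l-1)x)^{-1}$. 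I expect the proof that these two degree-$m$ coefficients agree to be the main technical obstacle: the two rational functions are genuinely distinct (their denominators have different degrees), so the identity holds only in the truncated range $m\le n$ and must be established by an honest manipulation. A convenient guide is the observation that $\prod_l(1+(d_l-1)x)$ is the total Chern polynomial of $N_{V/\pn}\otimes\mathcal{O}_V(-1)$, which suggests re-expressing the twist of $TV$ through $\mathcal{O}_V(-1)$ and regrouping the Wronski sums; alternatively one can argue by induction on the codimension $n-m$. As a consistency check, setting $n-m=0$ (so $V=\pn$) collapses both forms to $\sum_{i=0}^{n}k^{i}$, recovering the Baum--Bott count recalled in the Preliminaries.
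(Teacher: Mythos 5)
A contextual note first: the paper itself gives no proof of this Proposition --- it is quoted as a known result of Soares \cite{soares} --- so there is no internal argument to compare against. Your geometric reduction is the standard one (and essentially Soares'): invariance of $V$ makes $\Phi|_V$ a global section of $TV\otimes\mathcal{O}_V(k-1)$, non-degeneracy along $V$ makes its zeros simple and identifies them with $\sing(\fol)\cap V$, hence $N(\fol,V)=\int_V c_m\bigl(TV\otimes\mathcal{O}_V(k-1)\bigr)$, which the Euler and conormal sequences evaluate to $\deg(V)\sum_{j=0}^m\tau_j^{(n-m)}(k-1)^{m-j}$. All of that is correct.

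The genuine gap is your third step, which you yourself flag as ``the main technical obstacle'' and then leave open: you never prove that
$$[x^m]\,\frac{(1+x)^{n+1}}{(1-(k-1)x)\prod_{l=1}^{n-m}(1+d_lx)}\;=\;[x^m]\,\frac{1}{(1-kx)(1-x)\prod_{l=1}^{n-m}(1+(d_l-1)x)},$$
and without this identity you have derived a different-looking expression, not the stated Wronskian formula; checking one degenerate case ($n-m=0$) does not close it, and since everything before this point is routine Chern-class bookkeeping, this identity is exactly the nontrivial content of the Proposition. The gap is real but fillable in a few lines, so your proof strategy does succeed: for any formal power series $F$ one has $[x^m]F(x)=[y^m]\bigl\{(1+y)^{m-1}F\bigl(\tfrac{y}{1+y}\bigr)\bigr\}$, because $[y^{m-j}]\,(1+y)^{m-1-j}$ vanishes for $0\le j<m$ and equals $1$ for $j=m$. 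Apply this with $F(x)=\bigl((1-kx)(1-x)\prod_l(1+(d_l-1)x)\bigr)^{-1}$: under $x=\tfrac{y}{1+y}$ each factor transforms as $1-kx=\tfrac{1-(k-1)y}{1+y}$, $1-x=\tfrac{1}{1+y}$, $1+(d_l-1)x=\tfrac{1+d_ly}{1+y}$, and the powers of $(1+y)$ coming from these $(n-m)+2$ factors together with the prefactor $(1+y)^{m-1}$ assemble to exactly $(1+y)^{n+1}$, yielding the left-hand side. This computation also confirms your (correct) observation that the two rational functions are distinct and agree only at the single ``diagonal'' degree $m$: the prefactor $(1+y)^{m-1}$ in the coefficient identity depends on the degree being extracted, so no power-series identity is available, and an argument of this truncated type is unavoidable.
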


%%%%%%%%%%%%%%%%%%%%%%%% Theorem proof%%%%%%%%%%

\section{Deformation of a foliation by curves adapted to singular set}
We use the following fundamental lemma.
\begin{lemma}\label{lema_Theorem1}
Let $\fol$ be a holomorphic foliation by curves on $\P^n$, $n\geq
3$, of degree $k$. Suppose that
$$
 \mbox{Sing}(\fol) = W \cup\{p_1,\ldots,p_s\}
$$
is a disjoint union where $W=Z(h_{1},\ldots,h_{d})$ is a smooth
complete intersection of $\pn$ and $p_j$ are isolated points. Let
$\ell=m_E(\fol,W)$. Suppose that
$$V=Z(f_1,\ldots,f_m)$$ is $\fol$-invariant such that $f_i\in
\mathcal{I}(W)$ for all $1\leq i\leq m$. Then, $W\subset V$ and
there exists a one-parameter family of holomorphic foliations by
curves on $\pn$, given by $\{\fol_t\}_{t\in D}$ where
$D=\{t\in\C\,:\,|t|<\epsilon\}$ satisfying:
\begin{enumerate}
\item[(i)] $\fol_0=\fol$;
\item[(ii)] $\deg(\fol_t)=\deg(\fol)$;
\item[(iii)] If $\ell=0$ then $\sing(\fol_t)=\{p_1^t,\ldots,p_{s_t}^t\}$ , if $\ell>0$ then $\sing(\fol_t)=W
\cup\{p_1^t,\ldots,p_{s_t}^t\}$, disjoint union where $p_i^t$ are
isolated points, for any $t\in D\setminus\{0\}$;
\item[(iv)] If $\ell=0$ then $W$ is $\fol_t$-invariant, for any $t\in D\setminus\{0\}$;
\item[(v)] If $\ell>0$ then $W\subset\sing(\fol_t)$, $\ell=m_E(\fol_t,W)$ and $\fol_t$ is special along $W$ for any $t\in D\setminus\{0\}$;
\item[(vi)] $V$ is $\fol_t$-invariant, for any $t\in D$.
\end{enumerate}
\end{lemma}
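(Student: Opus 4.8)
The plan is to treat the inclusion $W\subset V$ as an immediate consequence of the hypothesis, and then to build the family $\{\fol_t\}$ by a single additive perturbation of the homogeneous vector field representing $\fol$, engineered so that the perturbation is at the same time tangent to $V$ and adapted to the normal structure of $W$. First I would establish the inclusion: since each $f_i\in\mathcal{I}(W)$ we may write $f_i=\lambda_{i1}h_1+\cdots+\lambda_{id}h_d$ with $\lambda_{ij}\in\C$, and as every $h_j$ vanishes on $W=Z(h_1,\ldots,h_d)$, each $f_i$ vanishes on $W$; hence $W\subset Z(f_1,\ldots,f_m)=V$. Because the $f_i$ are $\C$-linear combinations of the $h_j$, after a linear change among the generators of $\mathcal{I}(W)$ I may assume that $V$ is cut out by a subset of the equations defining $W$, so that $W$ is recovered from $V$ by imposing the remaining equations. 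I then fix, via Lemma \ref{lemafp}, local coordinates in which $W=\{z_1=\cdots=z_d=0\}$ and $\fol$ is given by $\ddd_\fol=\sum_j P_j\,\partial/\partial z_j$ with $m_W(P_j)=m_1$ for $j\le d$ and $m_W(P_j)=m_n$ for $j>d$, where $m_1\ge m_n$.

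\emph{The deformation.} Representing $\fol$ by a degree-$k$ homogeneous vector field $X$ modulo the radial field, I set $\fol_t:=[X+t\,Y]$, where $Y$ is a homogeneous degree-$k$ vector field chosen with two features: (a) $Y$ is tangent to $V$, i.e.\ $Y(f_i)\in(f_1,\ldots,f_m)$ for all $i$; and (b) the leading part of $Y$ along $W$ prescribes nonzero values to the functions $G_2,\ldots,G_d$ of the local blowup model \eqref{equ4en}. Property (a) makes $V$ invariant for every $t$, giving item (vi); since $\deg Y=k=\deg X$ and $X_0=X$, items (i) and (ii) are immediate. This is exactly where the hypothesis $f_i\in\mathcal{I}(W)$ is used: the defining equations of $V$ lie among those of $W$, so a perturbation supported on the normal directions of $W$ that are interior to $V$ automatically preserves the ideal of $V$, reconciling (a) with (b).

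\emph{Behaviour along $W$.} Applying the computation of Section 2.4 to $X+tY$, I split according to $\ell=m_E(\fol,W)$. If $\ell=0$, then $W$ cannot be dicritical, so $\ell=\min\{m_1-1,m_n\}=0$ with $m_W(\fol)\ge1$, forcing $m_1=m_n=1$; the vanishing of $\fol$ along $W$ is then minimal and a generic $t\ne0$ turns $W$ into a leaf, so $W$ becomes $\fol_t$-invariant and leaves $\sing(\fol_t)$ (items (iii), (iv)). If $\ell>0$, I choose $Y$ with multiplicity along $W$ large enough not to lower the leading order, so that $m_E(\fol_t,W)=\ell$ is preserved, while feature (b) forces $G_j^t\not\equiv0$ for all $j\in\{2,\ldots,d\}$; by the criterion recorded after \eqref{equ6en}, $\fol_t$ is then special along $W$ and $W\subset\sing(\fol_t)$ (item (v)). It remains to control the singularities away from $W$: the isolated points $p_1,\ldots,p_s$ persist as isolated singularities (possibly splitting) under a small deformation, and the set of parameters $t$ producing a positive-dimensional component off $W$ is a proper Zariski-closed subset of $D$; discarding it and shrinking $\epsilon$ yields the structure $\sing(\fol_t)=W\cup\{p_1^t,\ldots,p_{s_t}^t\}$ (or just the isolated points when $\ell=0$) for all $t\in D\setminus\{0\}$, proving (iii).

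The main obstacle is the simultaneous construction in the second step together with the preservation claim for $\ell$: producing one vector field $Y$ that is tangent to $V$, forces every transverse function $G_j$ to be nonzero along $W$ so that $\fol_t$ is special, and keeps the order of vanishing $\ell$ unchanged. The condition $f_i\in\mathcal{I}(W)$ is indispensable precisely here, since it aligns the equations of $V$ with those of $W$ and makes the $V$-tangency of an otherwise $W$-adapted perturbation automatic; without it there is no reason a perturbation forcing speciality along $W$ would respect the invariance of $V$.
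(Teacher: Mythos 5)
Your overall skeleton matches the paper's in spirit---an additive one-parameter deformation whose components vanish to order $\ell+1$ along $W$ in the normal directions and to order $\ell$ in the tangential ones, tangent to $V$, with the case split $\ell=0$ versus $\ell>0$---but your proposal assumes precisely the point that constitutes the technical content of the lemma: the existence of a single homogeneous vector field $Y$ of degree $k$ on $\pn$ with your properties (a) and (b). For a nonlinear complete intersection $W=Z(h_1,\ldots,h_d)$ of multidegree $(k_1,\ldots,k_d)$, vanishing to order $\ell+1$ along $W$ amounts to membership of the relevant components in the ideal power $(h_1,\ldots,h_d)^{\ell+1}$, whose nonzero elements have degree at least $(\ell+1)\min_j k_j$; a field of degree exactly $k$ satisfying this, simultaneously tangent to $V$, and with leading data generic enough to make every $G_j\not\equiv 0$ in \eqref{equ4en}, to keep the singularities of the strict transform on $E$ isolated, and to keep the singular set off $W$ zero-dimensional, need not exist, and even when the space of admissible $Y$'s is nonzero it can be spanned by very few elements (in extremal degrees, essentially by the components of $X$ itself), leaving no room for any genericity argument. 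The phrase ``a perturbation supported on the normal directions of $W$ that are interior to $V$'' has no global meaning when $W$ is not linear, so (a) is not ``automatic'' given (b). Two further soft spots: your claim that the bad parameter values form a \emph{proper} Zariski-closed subset of $D$ requires exhibiting at least one good member of the family, which again presupposes the construction; and in the case $\ell=0$ it is not ``a generic $t\neq 0$'' that turns $W$ into a leaf---no choice of $t$ can do this unless $Y$ itself is tangent to $W$ with tangential part not identically zero on $W$, a constraint on $Y$, not on $t$.

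This missing construction is exactly what the paper's proof supplies, by a different route. It does not perturb $X$ globally: using smoothness of $W$ it introduces the map $F(z)=(h_1(z),\ldots,h_d(z),z_{d+1},\ldots,z_n)$, a local biholomorphism on the set where $\det\left(\partial h_i/\partial z_j\right)_{1\le i,j\le d}\neq 0$, under which $W$ becomes the coordinate plane $H_d=\{w_1=\cdots=w_d=0\}$ and---this is where $f_i\in\mathcal{I}(W)$ enters---$V$ becomes the \emph{linear} subspace $H_m=\{\sum_j\lambda_{ij}w_j=0\}$. In these coordinates the perturbation is written down explicitly: monomials $w_1^{i_1}\cdots w_d^{i_d}$ of order $\ell+1$ with constant coefficients in the normal directions and of order $\ell$ with affine-linear coefficients in the tangential ones, the constants adjusted by elementary linear algebra so that $H_m$ is invariant. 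The deformed foliation is then transported back to the $z$-coordinates by Cramer's rule, and since the resulting components are polynomials, it extends to all of $\pn$ (Hartogs). Note that the outcome is not of your form $X+tY$ with $Y$ of degree $k$: up to saturation it is $\det(M)\,\ddd_{\fol}+tZ$, where $Z$ comes from the determinantal expansion. So your proposal has a genuine gap at its central step, and closing that gap essentially forces the paper's straightening-and-pullback construction (or an equally explicit substitute for it).
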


\begin{proof}
For simplicity we assume that $W$ is given in the affine coordinates
$(z_1,\ldots,z_n)$ by the zeros of the polynomials
$h_{1},\ldots,h_{d}$ and  $V$ is given by the zeros of the
polynomials $f_1,\ldots,f_{m}$, with $f_i\in \mathcal{I}(W)$. Now
let us consider the holomorphic function
\begin{gather*}
\begin{matrix}
F\  : &  \C^n\  & \longrightarrow & \C^{n}   \\
      & z=(z_1,\ldots,z_n)  & \longmapsto     & \bigg(h_1(z)\, ,\,\ldots\, ,\, h_{d}(z)\,,z_{d+1},\ldots,\, z_n\bigg).
\end{matrix}
\end{gather*}
By hypotheses we get
$$\begin{array}{cccc}
    f_1=&\lambda_{11}h_1 &+\ldots+&\lambda_{1d}h_{d} \\
   \vdots & \vdots & \vdots & \vdots\\
   f_{m}= & \lambda_{m1}h_1 & + \ldots+  &\lambda_{md}h_{d},
\end{array}$$
where $\lambda_{ij}\in\C$ for all $1\leq i\leq m$ and $1\leq j\leq
d$.
    Without loss of generality, since $W$ is smooth, we can admit
that $Det(M(z))\ne0$ where $M(z)=\left(\frac{\partial h_i}{\partial
z_j}\right)$ is the $d\times d$-minor of the Jacobian matrix $D_z F$
for $z$ in some open set $U \subset \C^n$. Thus, note that $F|_{U}$
is a local biholomorphism onto an open set
$\widetilde{U}\subset\C^n$ and restricted to $\widetilde{U}$ so one
may fix coordinates $w=F(z)$.

Then the image of $W\cap U$ by $F$ is the hyperplane of codimension
$d$,
$$H_{d}:=\{(w_1,\ldots,w_{n})\in\C^n:w_1=\ldots=w_{d}=0\}$$
while the image of $V\cap U$ by $F$ is given by
$$H_{m}:=\{(w_1,\ldots,w_{n})\in\C^n: \dps\sum_{j=1}^{d}\lambda_{ij}w_j=0,\quad 1\le i\le m\}.$$
To continue we describe the push-forward $(F)_*\fol$. We write
$$
\ddd_{(F)_*\fol}=P_1(w)\,\frac{\partial}{\partial w_1}+\ldots
+P_n(w)\,\frac{\partial}{\partial w_n}
$$
where
\begin{equation}
\label{equgil}
P_i(w)=\sum_{|a|=m_i}w_1^{a_1}\cdots w_{d}^{a_{d}}P_{i,a}(w)
\end{equation}
with at least one $P_{i,a}(z)$ not vanishing in $H_{d}$. Therefore we define $\G_{t}$ by the vector field
\begin{equation}\label{deformation}
\ddd_{\G_{t}}=\ddd_{(F)_*\fol}+t\cdot Y
\end{equation}
where
$$Y=Q_1\,\frac{\partial}{\partial w_1}+\ldots +Q_n\,\frac{\partial}{\partial w_n}
$$
with
$$Q_j(w)=\begin{cases}
   \dps\sum_{|I|=q_j}a_{I,j}w_{1}^{i_1}\cdots w_{d}^{i_d} & \text{if}\qquad 1\leq j\leq d\\
   \dps\sum_{|I|=q_j}R_{I,j}(w)w_{1}^{i_1}\cdots w_{d}^{i_d}  & \text{if}\qquad d+1\leq j\leq n.
\end{cases}$$
where  $a_{I,j}\in\C$, $R_{I,j}$ are affine linear functions and
\begin{equation}
\label{rem} q_1=\ldots=q_{d}=q_{d+1}+1=\ldots=q_n+1=\ell+1.
\end{equation}

\par One can choose $a_{I,j}$ in way
that $H_m$ are invariants by $Y$( rank-nullity theorem). If $\ell=0$
then follows from (\ref{deformation}) that $H_d$ and $H_m$ are
invariant by $\G_t$ for any $t\in D\setminus\{0\}$, indeed
$$\ddd_{\G_{t}}|_{H_{d}}=\ddd_{(F)_*\fol}|_{H_{d}}+t\cdot
Y|_{H_{d}}=t\cdot
Y|_{H_{d}}=t\sum_{j=d+1}^{n}R_{0,j}(w)\frac{\partial}{\partial
w_j},$$ because $q_{d+1}=\ldots=q_{n}=0$.

\par Now, if $\ell >0$ then $W\subset\sing(\G_t)$ and $\ell=m_{E}(\G_t,W)$ for all $t\in D\setminus\{0\}$. Again, $H_m$ is also invariant by
$\G_t$.

\par Denoting by $\fol_t$ the pull-back foliation of $\G_t$ by $F$, we
have that $\fol_t$ define a one-dimensional singular holomorphic
foliation in $U$. Indeed,  this foliation is given by a vector field
$$\ddd_{\fol_{t}}=P^{t}_{1}\frac{\partial}{\partial z_1}+\ldots +P^{t}_{n}\,\frac{\partial}{\partial z_n}$$
where $P^{t}(z)=(P^{t}_1(z),\ldots,P^{t}_n(z))$ are obtained by the
system $$Q^{t}\circ F(z)=D_zF(z)\cdot P^{t}(z),$$ with
$Q^{t}=(P_1+tQ_1,\ldots,P_n+tQ_n)$. Solving this system by Cramer's
rule, we get
$$P^{t}_{i}(z)=\frac{\det(A^{t}_i)}{\det(M)}$$
where one gets $A^{t}_i$ replacing the $i$-th column of $DF$ by the
vector column $Q^{t}\circ F(z)$. In particular,
$$P^{t}_{i}(z)=\frac{Q^{t}_{i}\circ F(z)\cdot\det(M)}{\det(M)}=Q^{t}_{i}\circ F(z)\cdot\det(M)\qquad\text{for all}\qquad1\leq i\leq d.$$
Therefore, after normalizing by the factor $\det(M)$, one may write
$\fol_t$ in $U$ as
\begin{eqnarray*}
\ddd_{\fol_{t}} & = & \det(A^{t}_1)\frac{\partial}{\partial{z_1}}+\ldots+\det(A^{t}_d)\frac{\partial}{\partial{z_d}} + \\
 &  & + Q^{t}_{d+1}\circ F(z)\cdot\det(M)\frac{\partial}{\partial{z_{d+1}}}+\ldots+Q^{t}_{n}\circ F(z)\cdot\det(M)\frac{\partial}{\partial{z_{n}}}.
\end{eqnarray*}
Since the components of $\ddd_{\fol_{t}}$ are polynomials, we can
consider $\fol_t$ defined in $\C^n$ by Hartogs extension Theorem.
Moreover, $V$ is invariant by $\fol_{t}$ (for any $t\in
D\setminus\{0\}$), hence it proved $(iv)$.

By construction, $F$ is a local biholomorphism and adjusting the
functions $R_{I,j}$ and the constant $a_{I,j}$ and shrinking
$\epsilon$, if necessary, we can admit that $\fol_t$ is special
along $W$ and
$\sing\big(\fol_t\big)=W\cup\{p_1^t,\ldots,p_{s_t}^t\}$, if $\ell>0$
and $\sing\big(\fol_t\big)=\{p_1^t,\ldots,p_{s_t}^t\}$, if $\ell=0$,
where each $p_i^t$ is an isolated point, proving $(iii)-(v)$. Also
by construction, $\fol_t$ satisfies the (vi)-condition. It is
immediate that $\fol_0=\fol$ proving $(i)$. The coefficients of
$R_{I,j}$ are chosen so that we have $\deg(\fol_t)=\deg(\fol)$
proving $(ii)$.
\end{proof}
It follows from Lemma \ref{lema_Theorem1} that we need to describe
two situations: the case in that $\fol$ is special along $W$ and the
case in that $W$ is invariant by $\fol$. First we assume that $\fol$
is special along $W$. In particular, the following result gives a
proof of Theorem \ref{second}.
\begin{theorem}\label{nunsing1}
Let $\fol$ be a holomorphic foliation by curves of degree $k\geq 1$
defined in $\P^n$ such that
$$\sing{\fol}=W\cup\{p_1,\ldots,p_{s}\}$$ is a disjoint union where $p_i$ are isolated singularities. Assume $\fol$ special along $W=Z(h_1,\ldots,h_d)$ with
$k_j=\mathrm{deg}(h_j)$. Then
$$
\sum_{i=1}^{s}\mu(\fol,p_i)=\dps\sum_{i=1}^{n}k^i +\nu(\fol,W),
$$
where $\nu(\fol,W)$ is given as in (\ref{nu_equation}).
\end{theorem}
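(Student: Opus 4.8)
The plan is to pass to the blowup $\pi:\pnt\to\pn$ along $W$ and to trade the non-isolated singularity for isolated ones, where a global index theorem can be applied. Since $\fol$ is special along $W$, the exceptional divisor $E=\pi^{-1}(W)$ is invariant by $\folt$ and $\sing(\folt)$ meets $E$ in at most finitely many points. Away from $E$ the map $\pi$ is a biholomorphism carrying $\folt$ to $\fol$, so off $E$ the singularities of $\folt$ are exactly the lifts of $p_1,\ldots,p_s$, with unchanged Milnor numbers. Hence $\folt$ has only isolated singularities on the compact manifold $\pnt$: these lifts, and finitely many points $q_1,\ldots,q_t\in E$. I would then invoke the Baum--Bott formula \cite{BB} on $\pnt$, which for a one-dimensional foliation with isolated singularities on a compact complex $n$-fold $X$ reads $\sum_q\mu=\int_X c_n(T_X\otimes\ll^{-1})$ (for $X=\pn$ this is the count recalled after (\ref{jouanolou})). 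Applied to $\folt$ it gives
\[
\sum_{i=1}^{s}\mu(\fol,p_i)+\sum_{j=1}^{t}\mu(\folt,q_j)=\int_{\pnt}c_n\big(T_{\pnt}\otimes\ll_{\folt}^{-1}\big).
\]

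The next step is to evaluate the right-hand integral. Using $\ll_{\folt}\cong\pi^*\ll_{\fol}\otimes\oo_{\pnt}(\ell E)$ and $\deg(\fol)=k$ one has $c_1(\ll_{\folt}^{-1})=(k-1)\pi^*\mathbf h-\ell E$, so I expand $c_n(T_{\pnt}\otimes\ll_{\folt}^{-1})=\sum_{j}c_j(T_{\pnt})\big((k-1)\pi^*\mathbf h-\ell E\big)^{n-j}$ and substitute the blowup Chern classes (\ref{for20})--(\ref{fcc1}). The part carrying no factor of $E$ is $\pi^*c_n(T_{\pn}\otimes\ll_{\fol}^{-1})$, which integrates over $\pnt$ to $\int_{\pn}c_n(T_{\pn}\otimes\ll_{\fol}^{-1})=1+k+\cdots+k^n$, the Baum--Bott count for $\fol$; every remaining term carries at least one $E$ and therefore localizes. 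Using $\iota^*E=\zeta$, so that $E\cdot\iota_*(x)=\iota_*(\zeta\,x)$, together with $\iota_*(\zeta^i)=(-1)^iE^{i+1}$ from (\ref{equslf}), each such term reduces to an integral over $E$ of a polynomial in $\zeta$, $\rho^*c(N)$, $\rho^*c(T_W)$ and $\rho^*\mathbf h$. Evaluating these with $c_i(N)=\sigma_i^{(d)}\mathbf h^i$ and $c_i(T_W)=\tau_i^{(d)}\mathbf h^i$ (see (\ref{ccni})--(\ref{ccti})) and the key identity (\ref{cce}) produces a closed expression $I_E$ in $\deg(W)$, the $\sigma$'s, $\tau$'s and Wronskians $\mathcal W^{(d)}$; the binomial factors $\varphi_a(x)=x^{n-d-a_2}(1+x)^{d-a_1}$ and the coefficients $\varphi_a^{(m)}(\ell)/m!$ appearing in (\ref{nu_equation}) arise precisely from expanding the twist $\big((k-1)\pi^*\mathbf h-\ell E\big)^{n-j}$ against these classes.

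Combining the two steps gives
\[
\sum_{i=1}^{s}\mu(\fol,p_i)=\big(1+k+\cdots+k^n\big)+\Big(I_E-\sum_{j=1}^{t}\mu(\folt,q_j)\Big),
\]
so the theorem reduces to the identity $I_E-\sum_{j=1}^{t}\mu(\folt,q_j)=\nu(\fol,W)$. This reconciliation is the heart of the matter and the step I expect to be the main obstacle: $I_E$ is purely topological, whereas $\sum_j\mu(\folt,q_j)$ depends a priori on the particular singularities created on $E$. To control the latter I would use the special normal form of $\folt$ along $E$: by speciality we are in the nondicritical case (i), with $m_1=m_n+1$, $\ell=m_n$ and all $G_j\not\equiv0$, so that near $E$ the foliation is given by (\ref{equ6en}) with $E$ invariant and only isolated zeros on it. A localization (residue) formula for the invariant compact divisor $E$ then expresses $\sum_j\mu(\folt,q_j)$ again through characteristic classes of $E$, of $N_{E/\pnt}=\oo_N(-1)$ and of $\ll_{\folt}$, which after the same reductions via (\ref{equslf}) and (\ref{cce}) combine with $I_E$ to leave exactly $\nu(\fol,W)$; equivalently, this net $E$-contribution can be read off as a residual-intersection (Segre class) computation for $W$ in $\pn$. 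Verifying that these two $E$-integrals collapse to the single closed form (\ref{nu_equation}) is the only genuinely delicate bookkeeping. Once it is done, the displayed identity yields the asserted equality, and since speciality forces the set of embedded closed points of $W$ to be empty, the particular case $\mu(\fol,W)=-\nu(\fol,W)$ follows at once.
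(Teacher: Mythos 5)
Your proposal follows essentially the same route as the paper's proof: blow up along $W$, apply Baum--Bott on $\pnt$ to compute $N(\folt,\pnt)$, use the invariance of $E$ (speciality) to evaluate the singularities created on $E$ by a Baum--Bott/localization computation on $E$ itself, and reduce both $E$-contributions to the closed form (\ref{nu_equation}) via the blowup Chern-class formulas (\ref{fcc1}), (\ref{ccde}) and (\ref{cce}) --- the ``delicate bookkeeping'' you defer is exactly what the paper carries out explicitly. One minor correction: speciality does not force the nondicritical case (i) with $m_1=m_n+1$ (the paper's Section 7 example is special along $W_0$ while being of type (iii)), but this is harmless for your argument, since the localization step on $E$ only requires the defining properties of speciality, namely that $E$ is $\folt$-invariant and meets $\sing(\folt)$ in finitely many points.
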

\begin{proof}
Let $\pi:\widetilde{\P}^n\to\pn$ be the blowup of $\pn$ along $W$,
with exceptional divisor $E$. Given that $\fol$ is special along
$W$, the singular set of $\widetilde\fol$ consists only of closed
points. Then

$$\sum_{j=1}^{s}\mu(\fol,p_j)=N(\widetilde{\fol},\pnt)-N(\widetilde{\fol},E),$$
because $\pi$ is an isomorphism away from $W$. By \cite{BB}, we have that
$$N(\widetilde\fol,\pnt) = \dps\int_{\pnt}c_n\big(T_{\pnt}\otimes\ll_{\widetilde{\fol}}^*\big)$$
where
\begin{equation}
\label{ff1} c_n(T_{\pnt}\otimes\ll_{\widetilde{\fol}}^*) =
\dps\sum_{j=0}^{n}c_j(T_{\pnt})\cdot
c_1^{n-j}(\ll_{\widetilde{\fol}}^*)
\end{equation} and by Lemma
\ref{lemlll}
\begin{equation}
\label{ccftg}
c_1^{n-j}(\ll_{\widetilde{\fol}}^*)=\dps\sum_{m=0}^{n-j}{{n-j}\choose{m}}\pi^*c_1^{m}(\ll_{\fol}^*)(-\ell
E)^{n-j-m}.
\end{equation}
In order to simplify our notation, each term of the expression
$\dps\int_{\pnt} c_j(T_{\pnt})\cdot
c_1^{n-j}(\ll_{\widetilde{\fol}}^*)$ will be computed as the sum of
two terms separately. Thus, from (\ref{fcc1}) its first term is
$$\dps\int_{\pnt}\pi^*c_j(T_{\pn})c_1^{n-j}(\ll_{\widetilde{\fol}}^*)=\sum_{m=0}^{n-j}\int_{\pnt}{{n-j}\choose{m}}\pi^*c_j(T_{\pn})\pi^*c_1^{m}(\ll_{\fol}^*)(-\ell
E)^{n-j-m}.$$ Therefore, in this last expression, it is enough to
focus only for $m=n-j$ and $m+j\le n-d$. Consequently,
\begin{align*}
\dps\int_{\pnt}\pi^*c_j(T_{\pn})c_1^{m}(\ll_{\widetilde{\fol}}^*)&=\int_{\pnt}\pi^*c_{j}(\pn)\pi^*c_1^{n-j}(\ll_{\fol}^*)+\\
&\dps\sum_{m=0}^{n-d-j}\int_{\pnt}{{n-j}\choose{m}}\rho^*c_j(\pn)\rho^*c_1^{m}(\ll_{\fol}^*)(-\ell E)^{n-j-m}.
\end{align*}
Therefore, the first term results in

\begin{equation}
\label{pripar}
\begin{array}{ll}
\dps\int_{\pnt}&\pi^*c_j(T_{\pn})\cdot c_1^{n-j}(\ll_{\widetilde{\fol}}^*)=\dps{{n+1}\choose{j}}(k-1)^{n-j}+\\
&\dps\sum_{m=0}^{n-d-j}\int_{E}{{n-j}\choose{m}}\rho^*c_j(\pn)\rho^*c_1^{m}(\ll_{\fol}^*)(-\ell)^{n-j-m}\zeta^{n-j-m-1}.
\end{array}
\end{equation}
In a similar way as computed in (\ref{cce}), we obtain the following
sum
\begin{equation}
\label{pripar1}
\begin{array}{ll}
\dps\sum_{j=0}^{n}\int_{\pnt}&\pi^*c_j(T_{\pn})\cdot c_1^{n-j}(\ll_{\widetilde{\fol}}^*)=1+k+k^2+k^3+\ldots+k^{n-1}+k^n +\\
&\dps\sum_{j=0}^{n-d}\sum_{m=0}^{n-d-j}(-1)^{\delta_j^m-1}{{n+1}\choose{j}}{{n-j}\choose{m}}(k-1)^m\ell^{n-j-m}\mathcal{W}_{\delta_j^m}^{(d)}\deg(W)
\end{array}
\end{equation}
where $\delta_{j}^{m}=n-d-j-m$. Now, the second term of (\ref{ff1})
is

$$\dps\int_{\pnt}\sum_{|a|=0}^{j-1}(-1)^{j-|a|-1}\Gamma_{a}^j\rho^*c_{a_1}(N)\rho^*c_{a_2}(T_{W})c_1^{n-j}(\ll_{\widetilde{\fol}}^*)E^{j-|a|}.$$

In this time, given that $|a|+m\le j-1 + n-j = n-1$, it is enough to
consider $m+|a|\le n-d$. In the same way as done in the case before,
the second term of (\ref{ff1}) results

\begin{equation*}
\dps\sum_{|a|=0}^{j-1}\sum_{m=0}^{n-d-|a|}(-1)^{\delta_{|a|}^{m}}{{n-j}\choose{m}}\Gamma_{a}^{j}\ell^{n-j-m}(k-1)^m\sigma_{a_1}^{(d)}\tau_{a_2}^{(d)}\mathcal{W}_{\delta_{|a|}^{m}}^{(d)}\deg(W).
\end{equation*}
Consequently, after adding these terms,, we get
$$
\begin{array}{rl}
&N(\widetilde{\fol},\pnt)=1+k+k^2+k^3+\ldots+k^{n-1}+k^n+\cr
&\deg(W)\dps\sum_{j=0}^{n-d}\sum_{m=0}^{n-d-j}(-1)^{\delta_j^m-1}{{n+1}\choose{j}}{{n-j}\choose{m}}(k-1)^m\ell^{n-j-m}\mathcal{W}_{\delta_j^m}^{(d)}+\cr
&\deg(W)\dps\sum_{j=1}^{n}\sum_{|a|=0}^{j-1}\sum_{m=0}^{n-d-|a|}(-1)^{\delta_{|a|}^{m}}{{n-j}\choose{m}}\Gamma_{a}^{j}\ell^{n-j-m}(k-1)^m\sigma_{a_1}^{(d)}\tau_{a_2}^{(d)}\mathcal{W}_{\delta_{|a|}^{m}}^{(d)}.
\end{array}
$$
By Whitney sum, we know that $c(T_{\pn})|_{W}=c(N)|_{W}\cdot
c(T_{W})$, i.e.,
$$\dps{{n+1}\choose{j}}=\sum_{a_1+a_2=j}\sigma_{a_1}^{(d)}\tau_{a_2}^{(d)},$$
we get
\begin{equation}
\label{nspnt} \begin{array}{ll}
&N(\widetilde{\fol},\pnt)=1+k+k^2+k^3+\ldots+k^{n-1}+k^n+\cr
&\deg(W)\dps\sum_{|a|=0}^{n-d}\sum_{j=|a|}^{n}\sum_{m=0}^{n-d-|a|}(-1)^{\delta_{|a|}^{m}}{{n-j}\choose{m}}\Gamma_{a}^{j}\ell^{n-j-m}(k-1)^m\sigma_{a_1}^{(d)}\tau_{a_2}^{(d)}\mathcal{W}_{\delta_{|a|}^{m}}^{(d)}.
\end{array}
\end{equation}

Now, we will compute the number of isolated singularities, counted with multiplicity, of $\widetilde{\fol}$ on $E$. Again, by  Baum-Bott's formula, we have that
$$N(\widetilde{\F}, E)= \int_{E}c_{n-1}(\tt_{E} \otimes \ll_{\widetilde{\fol}}^*)$$
with
$$
c_{n-1}(\tt_{E} \otimes \ll_{\widetilde{\fol}}^*) = \sum_{i=0}^{n-1}c_i(E)\cdot c_1(\ll_{\widetilde{\fol}}^*)^{n-i-1}.
$$
On the one hand,
\begin{equation}
\label{equwit}
c_i(E) = c_i(\tt_{\pnt}\otimes\oo_E)-c_{i-1}(E) \zeta
\end{equation}
and reapplying (\ref{equwit}) recursively we obtain
$$
c_i(E)=\dps\sum_{j=0}^{i}(-1)^jc_{i-j}(\tt_{\pnt}\otimes\oo_E)
\zeta^{j}.
$$
Then, using also (\ref{fcc}), we get
\begin{align*}
c_{i}(E)&=\sum_{j=0}^{i}(-1)^{i-j}\pi^*c_{j}(\pn)\zeta^{i-j} +\\
&\ \ \ +\dps\sum_{j=0}^{i}\sum_{|a|=0}^{j-1}(-1)^{i-|a|-1}\Gamma_{a}^{j}\rho^*c_{a_1}(N)c_{a_2}(T_{W})\zeta^{i-|a|}\\
&=\sum_{|a|=0}^{i}(-1)^{i-|a|}\rho^*c_{a_1}(N)c_{a_2}(T_{W})\zeta^{i-|a|} +\\
&\ \ \
+\dps\sum_{|a|=0}^{i-1}\sum_{j=|a|+1}^{i}(-1)^{i-|a|-1}\Gamma_{a}^{j}\rho^*c_{a_1}(N)c_{a_2}(T_{W})\zeta^{i-|a|}
\end{align*}
Given that
$$\sum_{j=|a|+1}^{i}\Gamma_{a}^{j}=\dps1-{{d-a_1}\choose{i-|a|}}$$
follows that
\begin{equation}
\label{ccde}
c_{i}(E)=\dps\sum_{|a|=0}^{i}(-1)^{i-|a|}{{d-a_1}\choose{i-|a|}}\rho^*c_{a_1}(N)c_{a_2}(T_{W})\zeta^{i-|a|}
\end{equation}
On the other hand, as
$c_1(\ll_{\widetilde{\fol}}^*)=\pi^*c_1(\ll_{\fol}^*) -\ell\zeta$,
we have
\begin{equation}
\label{ftf}
c_1(\ll_{\widetilde{\fol}}^*)^{n-i-1}=\dps\sum_{m=0}^{n-i-1}{{n-i-1}\choose{m}}\pi^*c_1(\ll_{{\fol}}^*)^{m}(-\ell\zeta)^{n-i-m-1}.
\end{equation}
Again, as done in the case before, we obtain that
$N(\widetilde{\fol},E)$ is equal to $$
\dps\deg(W)\sum_{i=0}^{n-1}\sum_{|a|=0}^{i}\sum_{m=0}^{n-d-|a|}(-1)^{\delta_{|a|}^{m}}{{n-i-1}\choose{m}}{{d-a_1}\choose{i-|a|}}\frac{(k-1)^{m}}{\ell^m}\ell^{n-1-i}\sigma_{a_1}^{(d)}\tau_{a_2}^{(d)}\mathcal{W}_{\delta_{|a|}^{m}},
$$
in which can be rewritten as
$$
\deg(W)\sum_{|a|=0}^{n-d}\sum_{j=|a|+1}^{n}\sum_{m=0}^{n-d-|a|}(-1)^{\delta_{|a|}^{m}}{{n-j}\choose{m}}{{d-a_1}\choose{j-|a|}-1}\frac{(k-1)^{m}}{\ell^m}\ell^{n-j}\sigma_{a_1}^{(d)}\tau_{a_2}^{(d)}\mathcal{W}_{\delta_{|a|}^{m}}.
$$
However,
$${\psi}_{a}(x)=(1+x)^{d-a_1}x^{n-d-a_2-1}=\sum_{i=0}^{n-|a|-1}{{d-a_1}\choose{i}}x^{n-|a|-i-1}$$
which yields

$$\sum_{j=|a|+1}^{n}{{n-j}\choose{m}}{{d-a_1}\choose{j-|a|}-1}\ell^{n-j-m}=\frac{{\psi}_{a}^{(m)}(x)}{m!}.$$
Therefore,
\begin{equation}
\label{nse}
N(\widetilde{\fol},E)=\sum_{|a|=0}^{n-d}\sum_{m=0}^{n-d-|a|}(-1)^{\delta_{|a|}^{m}}\frac{{\psi}_{a}^{(m)}(\ell)}{m!}(k-1)^m\sigma_{a_1}^{(d)}\tau_{a_2}^{(d)}\mathcal{W}_{\delta_{|a|}^{m}}.
\end{equation}

Now, the formula stated in the theorem is obtained making
$$\sum_{j=1}^{s}\mu(\fol,p_j)=N(\widetilde{\fol},\pnt)-N(\widetilde{\fol},E),$$
that is,
\begin{align*}
&\dps\sum_{j=1}^{s}\mu(\fol,p_j)=1+k+k^2+k^3+\ldots+k^{n-1}+k^n+\\
&\deg(W)\sum_{|a|=0}^{n-d}\sum_{j=|a|}^{n}\sum_{m=0}^{n-d-|a|}(-1)^{\delta_{|a|}^{m}-1}{{n-j}\choose{m}}{{d-a_1}\choose{j-|a|}}(k-1)^{m}\ell^{n-i-m}\sigma_{a_1}^{(d)}\tau_{a_2}^{(d)}\mathcal{W}_{\delta_{|a|}^{m}}^{(d)}.
\end{align*}
\par But, given that
$$\dps\sum_{j=|a|}^{n}{{n-j}\choose{m}}{{d-a_1}\choose{j-|a|}}\ell^{n-j-m}=\frac{\varphi_{a}^{(m)}(\ell)}{m!}$$
with $\varphi_a(x)=x^{n-d-a_2}(1+x)^{d-a_1}$, we get the final
expression given in the Theorem \ref{theorem1}.
\end{proof}
\par Now we consider the case where $W$ is invariant by $\fol$.

\begin{proposition}\label{casel0} Let $\fol$ be a holomorphic foliation by curves on $\pn$ of degree $k$. Assume that $W\subset\pn$ is smooth subvariety of codimension $d\geq 2$ invariant by
$\fol$. Then, the number of isolated singularities (counted with multiplicity) of $\fol$ on $W$ is
$$N(\fol,W)=-\nu(\fol,W)|_{\ell=0}.$$
\begin{proof}
It is enough to observe that
$$\varphi_{a}(x)=x^{n-d-a_2}(1+x)^{d-a_1}=x^{n-d-a_2}+(d-a_1)x^{n-a_1-a_2}+\ldots,$$
thus, $\varphi_{a}^{(m)}(0)=0$ for $m=0,\ldots,n-d-a_2-1$ and
$\varphi_{a}^{(m)}(0)=m!$ for $m=n-d-a_2$.
Now, if $a=(a_1,a_2)$ with $a_1>0$ then $\varphi_{a}^{(m)}(0)=0$ for
$m=0,\ldots,n-d-|a|$, because $m\le n-d-|a|=n-d-a_1-a_2<n-d-a_2$.
Consequently, $\varphi_{a}^{(m)}(0)\ne0$ if, and only if, $a=(a_1,a_2)$
with $a_1=0$ and $m=n-d-|a|$. Writing $|a|=a_2=i$, we get
\begin{align*}
\dps\nu(\fol,W)|_{\ell=0}&= -\dps\sum_{i=1}^{n-d}\deg(W)\tau_{i}^{(d)}(k-1)^{n-d-i}\\
&=-\dps\sum_{i=0}^{n-d}\int_{W}c_i(T_W)\cdot c_1(\ll_{\fol}^*)^{n-d-i} \\
&=-\dps\int_{W}c_{n-d}(T_W\otimes \ll_{\fol}^*)\\
&=-N(\fol,W).
\end{align*}
\end{proof}
\end{proposition}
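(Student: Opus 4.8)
The plan is to compute the explicit expression (\ref{nu_equation}) for $\nu(\fol,W)$ at $\ell=0$ and to recognize the resulting number as a Baum--Bott integral over the invariant manifold $W$. Everything hinges on the behaviour at the origin of the auxiliary polynomial $\varphi_a(x)=x^{n-d-a_2}(1+x)^{d-a_1}$. Its expansion begins with the monomial $x^{n-d-a_2}$, of coefficient $1$, so that $\varphi_a^{(m)}(0)=0$ whenever $m<n-d-a_2$, while $\varphi_a^{(m)}(0)=m!$ precisely when $m=n-d-a_2$. The first step is to feed this into the double sum defining $\nu(\fol,W)$.

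Since the inner index $m$ in (\ref{nu_equation}) only runs up to $n-d-|a|=n-d-a_1-a_2$, I would note that every multi-index with $a_1>0$ contributes nothing: there $m\le n-d-a_1-a_2<n-d-a_2$, forcing $\varphi_a^{(m)}(0)=0$. Thus only the indices $a=(0,a_2)$ survive, and for these the unique non-vanishing term is the top one, $m=n-d-a_2$, at which $\varphi_a^{(m)}(0)/m!=1$ and the sign exponent $\delta_{|a|}^m=n-d-|a|-m$ is $0$. Writing $i=a_2$ and using $\sigma_0^{(d)}=1=\mathcal{W}_0^{(d)}$, the double sum collapses to
\begin{equation*}
\nu(\fol,W)|_{\ell=0}=-\deg(W)\sum_{i=0}^{n-d}\tau_i^{(d)}(k-1)^{n-d-i}.
\end{equation*}

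Next I would read each summand as an intersection number on $W$. From $\ll_\fol=\oo_{\pn}(1-k)$ one has $c_1(\ll_\fol^*)=(k-1)\mathbf{h}$, and by (\ref{ccti}) $c_i(T_W)=\tau_i^{(d)}\mathbf{h}^i$; since $\int_W\mathbf{h}^{n-d}=\deg(W)$, each term equals $\int_W c_i(T_W)\cdot c_1(\ll_\fol^*)^{n-d-i}$. Summing over $i$ and using the expansion of the top Chern class of a twist by a line bundle (exactly as in (\ref{ff1})) gives
\begin{equation*}
-\nu(\fol,W)|_{\ell=0}=\sum_{i=0}^{n-d}\int_W c_i(T_W)\cdot c_1(\ll_\fol^*)^{n-d-i}=\int_W c_{n-d}(T_W\otimes\ll_\fol^*).
\end{equation*}
Finally, because $W$ is $\fol$-invariant, $\fol$ restricts to a foliation by curves on the $(n-d)$-dimensional manifold $W$ with tangent sheaf $\ll_\fol|_W$, and the Baum--Bott formula identifies the right-hand integral with the number $N(\fol,W)$ of isolated singularities of $\fol$ on $W$ counted with multiplicity. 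This yields $N(\fol,W)=-\nu(\fol,W)|_{\ell=0}$.

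The only delicate point is the bookkeeping in the first two steps: matching the range of $m$ against the vanishing order of $\varphi_a$ so as to isolate the surviving terms $a_1=0$, $m=n-d-a_2$, and checking that all the accompanying symmetric-function factors and signs reduce to $1$. Once this collapse is in place, the passage to the Chern-class integral and the appeal to Baum--Bott are entirely routine.
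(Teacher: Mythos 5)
Your proposal is correct and takes essentially the same route as the paper's proof: the same analysis of the vanishing orders $\varphi_a^{(m)}(0)$ to show only the terms with $a_1=0$, $m=n-d-a_2$ survive, the same collapse of (\ref{nu_equation}) to $-\deg(W)\sum_{i}\tau_i^{(d)}(k-1)^{n-d-i}$, and the same identification of this sum with $\int_W c_{n-d}(T_W\otimes\ll_{\fol}^*)=N(\fol,W)$ via Baum--Bott on the invariant subvariety $W$. Your write-up is in fact slightly cleaner on the bookkeeping (e.g.\ noting $\sigma_0^{(d)}=\mathcal{W}_0^{(d)}=1$ and starting the collapsed sum at $i=0$, where the paper's first displayed line starts at $i=1$, an apparent typo).
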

\section{Proof of Theorem \ref{theorem1}}
It suffices to prove Theorem \ref{theorem1} for a holomorphic
foliation by curves $\fol$ with only an irreducible component $W$
(of positive dimension) of $\sing(\fol)$. In this sense, let $\fol$
be a foliation by curves on $\pn$ of degree $k$ such that
$$
 \mbox{Sing}(\fol) = W \cup\{p_1,\ldots,p_s\},
$$
where $W=Z(h_{1},\ldots,h_{d})$. There exists a one-parameter family
of foliations by curves on $\pn$, given by $\{\fol_t\}_{t\in D}$
where $D=\{t\in\C\,:\,|t|<\epsilon\}$ satisfying conditions
$(i)-(v)$ of Lemma \ref{lema_Theorem1}. When $\ell>0$, we have

$$\dps\sum_{j=1}^{s}\mu(\fol,p_j)=\lim_{t\to0}\sum_{\lim\widetilde{p}_j^t \notin
E}\mu(\widetilde{\fol}_t,\widetilde{p}_j^t),$$ where
$\widetilde{\fol}_t$ is the induced foliation by $\fol_t$ via the
blowup $\pi:\pnt\to\pn$ centered at $W$ with exceptional divisor $E$
and
$\sing(\widetilde{\fol}_t)=\{\widetilde{p}_j^t:\,\,j=1,\ldots,\widetilde{s}_t\}$.
On the other hand,
$$\dps\sum_{j=1}^{s}\mu(\fol,p_j)=N(\widetilde{\fol_t},\pnt) - \lim_{t\to0}\sum_{\lim\widetilde{p}_j^t \in
E}\mu(\widetilde{\fol_t},\widetilde{p}_j^t).$$
Since $\fol_t$ is special along $W$, by Theorem
\ref{nunsing1}, we get
$$\dps\sum_{j=1}^{s}\mu(\fol,p_j)=N(\widetilde{\fol}_t,\pnt) - N(\widetilde{\fol}_t,E) - N(\fol,A_{W}).$$
Then,
$$\dps\sum_{j=1}^{s}\mu(\fol,p_j)=\dps\sum_{i=1}^{n}k^i +\nu(\fol,W) - N(\fol,A_{W}).$$
We finish the proof by considering $\ell=0$. According to Lemma
\ref{lema_Theorem1}, we can suppose that $W$ is invariant by
$\fol_t$. Note that
$$\dps\sum_{j=1}^{s}\mu(\fol,p_j)=\lim_{t\to0}\sum_{\lim p_j^t \notin
W}\mu(\fol_t,p_j^t).$$
Therefore,
$$\dps\sum_{j=1}^{s}\mu(\fol,p_j)=N(\fol_t,\pn) - \lim_{t\to0}\sum_{\lim p_j^t \in
W}\mu(\fol_t,p_j^t).$$
Thus,
$$\dps\sum_{j=1}^{s}\mu(\fol,p_j)=N(\fol_t,\pn) - N(\fol_t,W) - N(\fol,A_{W}).$$
By Proposition \ref{casel0}, we get
$$\dps\sum_{j=1}^{s}\mu(\fol,p_j)=\dps\sum_{i=1}^{n}k^i
+\nu(\fol,W)|_{\ell=0} - N(\fol,A_{W}).$$

%%%%%%%%%%%%%%%%%%%%%%%%%%

\section{Deformation of a foliation by curves in presence of an invariant subvariety }
The aim of this section is to prove an adaption of Lemma
\ref{lema_Theorem1} for foliations by curves  which admit an
invariant subvariety of $\pn$.
%%%%%%%%%%%%%%%%
\begin{proposition} \label{propfin}
Let $\fol$ be a holomorphic foliation by curves on $\pn$, $n\geq 3$,
of degree $k$ such that $W=Z(h_1,\ldots,h_d)\subset \sing(\fol)$.
Assume that the smooth variety $V=Z(f_1,\ldots,f_m)$ is invariant by
$\fol$ such that $f_j\in\mathcal{I}(W)$ and $1\leq m <d$. If $\fol$
is special along $W$ then
\begin{enumerate}
\item[(i)]
$N(\widetilde{\fol},\widetilde{V})=N(\fol,V)+\dps\sum_{i=0}^{n-d}\sum_{j=0}^{n-d-i}{{n-m-i}\choose{j}}\tau_{i}^{(m)}(k-1)^{j}(-\ell)^{\gamma_i^j}\alpha_{\widetilde{V}_E}^{(\gamma_i^j-1)},
$
\item[(ii)]
$ N(\widetilde{\fol},\widetilde{V}_E)=
\dps\sum_{i=0}^{n-d}\sum_{j=0}^{n-d-i}\sum_{p=i+1}^{n-m-j}{{n-m-p}\choose{j}}(-1)^{\gamma_i^j-1}\ell^{\gamma_p^j}\tau_i^{(m)}(k-1)^j\alpha_{\widetilde{V}_E}^{\gamma_i^j-1},$
\item[(iii)]
$
\nu(\fol,V,W)=N(\fol,V)+\dps\sum_{i=0}^{n-d}\sum_{j=0}^{n-d-i}\frac{\Omega^{(j)}(\ell)}{j!}(-1)^{\gamma_i^j}\ell^{\gamma_p^j}\tau_i^{(m)}(k-1)^j\alpha_{\widetilde{V}_E}^{\gamma_i^j-1},
$
\end{enumerate}
where $\tau_j^{(m)}:=\tau_j^{(m)}(d_1,\ldots,d_q)$, as (\ref{ccti}),
$d_j=\deg(f_j)$, $\gamma_j^i=n-m-i-j$, $\widetilde{\fol}$ is the
strict transform of $\fol$ by the blowup $\pi$ at $W$,
$\widetilde{V}=\overline{\pi^{-1}(V\setminus W)}$,
$\widetilde{V}_E=\widetilde{V}\cap E$,
$\alpha_{\widetilde{V}_E}^{(i)}:=\dps\int_{{V}_E}\pi^*(\mathbf{h})^{n-m-1-i}\cdot
\zeta^i$ and $\Omega(x)=(x^{n-m-i}+x^{n-m-i-1}+\ldots+x^{j})$.
\begin{proof}

\par Let $\alpha_{V}^{(i)}:=\dps\int_{\widetilde{V}}\pi^*(\mathbf{h})^{n-m-i}\cdot \zeta^i$ for $i\ge 0$. It is not difficult to
see that $\alpha_{V}^{(0)}=\deg(V)$ and
$\alpha_{V}^{(i)}=\alpha_{V_E}^{(i-1)}$, for $i\ge 1$. Now, given
that $\alpha_{V_E}^{(i)}=0$ for $i < d-m-1$, then
$$\alpha_{V_E}^{(d-m-1)}=(-1)^{d-m-1}\deg(W).$$ Assume that we have defined $\alpha_{V_E}^{q}$ for $q=0,\ldots,i-1<d-1$. By
(\ref{ccde}), we have the following relation
$$(-1)^{i}{{d}\choose{i}}\zeta^i=c_i(E)-\dps\sum_{|a|=1}^{i}(-1)^{i-|a|}{{d-a_1}\choose{i-|a|}}\rho^*c_{a_1}(N)c_{a_2}(T_{W})\zeta^{i-|a|}.$$
As a consequence
\begin{equation}
\label{ich}
(-1)^{i}{{d}\choose{i}}\alpha_{V_E}^{(i)}={{d}\choose{\varrho_m}}\tau_{i-\varrho_m}^{(d)}\deg(W)-\dps\sum_{|a|=1}^{i}(-1)^{i-|a|}{{d-a_1}\choose{i-|a|}}\sigma_{a_1}^{(d)}\tau_{a_2}^{(d)}\alpha_{V_E}^{(i-|a|)}.
\end{equation}
where $\varrho_m=d-m-1$. Furthermore, in order to compute
$\alpha_{{V}_E}^{(i)}$ for $i\ge d$, we will use (\ref{zetger}).
Again, the computation of $N(\widetilde{\fol},\widetilde{V})$
follows directly from the Baum-Bott formula. Thus,
$$N(\widetilde{\fol},\widetilde{V})=\dps\int_{\widetilde{V}}c_{n-m}(T_{\widetilde{V}}\otimes
\ll_{\widetilde{\fol}}^{*})=\dps\int_{\widetilde{V}}\sum_{i=0}^{n-m}c_i(T_{\widetilde{V}})c_1(\ll_{\widetilde{\fol}}^{*})^{n-m-i}$$
where $c_i(T_{\widetilde{V}})=\pi^*c_i(T_{V})$. As before, we obtain

\begin{equation*}
\dps
N(\widetilde{\fol},\widetilde{V})=\sum_{i=0}^{n-m}\sum_{j=0}^{n-m-i}\int_{\widetilde{V}}{{n-m-i}\choose{j}}\pi^*c_i(T_{V})\pi^*c_1(\ll_{{\fol}}^{*})^{j}(-\ell\zeta)^{\gamma_j^i}.
\end{equation*}
For $j=n-m-i$ and $i+j\le n-d$ in the last sum, we have concluded
the proof of (i).

\noindent To prove $(ii)$, it is enough to observe that
$\widetilde{V}$ and the exceptional divisor $E$ are nonsingular
subvarieties of a nonsingular complex variety ${\pnt}$. Thus, given
that $\widetilde{V}$ and $E$ intersect properly and transversally,
the intersection $\widetilde{V}_{E}$ is nonsingular and an
elementary Chern class computation proves that
$$ c(T_E)\cdot c(\widetilde{V}) = c(T_{\pnt})\cdot c(T_{\widetilde{V}_E}).$$
As consequence, we obtain that
$$c_q(T_{\widetilde{V}_E})=\dps\sum_{i=0}^{q}(-1)^{q-i}c_i(T_{\widetilde{V}})\cdot
(\zeta)^{q-i}.$$ To conclude the proof of (2), it is enough to
consider
$$N(\widetilde{\fol},\widetilde{V}_E)= \dps
\int_{\widetilde{V}_E}c_{n-m-1}(T_{\widetilde{V}_E}\otimes\ll_{\widetilde{\fol}}^{*})=\int_{\widetilde{V}_E}\sum_{q=0}^{n-m-1}c_q(T_{\widetilde{V}_E})c_1(\ll_{\widetilde{\fol}}^{*})^{n-m-1-q}.$$
Now, we need to repeat the same steps,

$$N(\widetilde{\fol},\widetilde{V}_E)= \dps
\sum_{q=0}^{n-m-1}\sum_{i=0}^{q}\sum_{j=0}^{n-m-1-q}{{n-m-1-q}\choose{j}}(-1)^{\gamma_i^j-1}\ell^{\gamma_q^j-1}\tau_i^{(m)}(k-1)^j\alpha_{\widetilde{V}_E}^{\gamma_i^j-1}.$$
Thus,

$$N(\widetilde{\fol},\widetilde{V}_E)=
\dps\sum_{i=0}^{n-m-1}\sum_{j=0}^{n-m-1-i}\sum_{q=i}^{n-m-1-j}{{n-m-1-q}\choose{j}}(-1)^{\gamma_i^j-1}\ell^{\gamma_q^j-1}\tau_i^{(m)}(k-1)^j\alpha_{\widetilde{V}_E}^{\gamma_i^j-1}.$$
Making $q:=p-1$ and considering $i+j\le n-d$, we conclude the proof
of (ii). Finally, to proof (iii) it is enough to observe that
$\nu(\fol,V,W)=N(\fol,V\setminus
W)=N(\widetilde{\fol},\widetilde{V})-N(\widetilde{\fol},\widetilde{V}_E).$
Therefore,
$\nu(\fol,V,W)=N(\fol,V)+\dps\sum_{i=0}^{n-d}\sum_{j=0}^{n-d-i}\sum_{p=i}^{n-d-j}{{n-m-p}\choose{j}}(-1)^{\gamma_i^j}\ell^{\gamma_p^j}\tau_i^{(m)}(k-1)^j\alpha_{\widetilde{V}_E}^{\gamma_i^j-1}
$. But,
$$\sum_{p=i}^{n-m-j}{{n-m-p}\choose{j}}\ell^{\gamma_p^j}=\dps\frac{1}{j!}\frac{d^{j}}{dx^j}(x^{n-m-i}+x^{n-m-i-1}+\ldots+x^{j})\bigg|_{x=\ell}$$
which concludes the proof of (iii).
\end{proof}
\end{proposition}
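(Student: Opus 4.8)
The plan is to obtain all three identities as applications of the Baum--Bott residue formula on the two smooth varieties $\widetilde{V}$ and $\widetilde{V}_E$, extracting the stated expressions by expanding top Chern classes and reorganizing the resulting sums. The organizing principle is the decomposition
$$\nu(\fol,V,W)=N(\fol,V\setminus W)=N(\folt,\widetilde{V})-N(\folt,\widetilde{V}_E),$$
valid because $\pi$ restricts to an isomorphism $\widetilde{V}\setminus\widetilde{V}_E\to V\setminus W$ and, by the speciality hypothesis, $\sing(\folt)$ meets $E$ (hence $\widetilde{V}_E$) only in isolated points, so both counts on the right are finite. Thus (iii) is a formal consequence of (i) and (ii), and the substance lies in the two Chern-class computations.

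For (i) I would apply Baum--Bott on the smooth $(n-m)$-fold $\widetilde{V}$,
$$N(\folt,\widetilde{V})=\int_{\widetilde{V}}c_{n-m}\bigl(T_{\widetilde{V}}\otimes\ll_{\folt}^{*}\bigr)=\sum_{i=0}^{n-m}\int_{\widetilde{V}}c_i(T_{\widetilde{V}})\,c_1(\ll_{\folt}^{*})^{\,n-m-i}.$$
Since $\widetilde{V}$ is the blowup of $V$ along $W$, one has $c_i(T_{\widetilde{V}})=\pi^{*}c_i(T_V)$ up to exceptional classes supported on $\widetilde{V}_E$ (handled by the analogue of (\ref{for20})), while Lemma \ref{lemlll} gives $c_1(\ll_{\folt}^{*})=\pi^{*}c_1(\ll_{\fol}^{*})-\ell\zeta$. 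Expanding the binomial and inserting $c_i(T_V)=\tau_i^{(m)}\mathbf{h}^{i}$ from (\ref{ccti}), every monomial becomes an integral of $\pi^{*}(\text{class})\cdot\zeta^{\,\gamma_i^{j}}$. The terms with $\gamma_i^{j}=0$ carry no $\zeta$ and, by Baum--Bott on $V$ itself, reassemble into $N(\fol,V)$; the terms with positive $\zeta$-power localize onto $\widetilde{V}_E$, where $\int_{\widetilde{V}}\pi^{*}\mathbf{h}^{\,n-m-\gamma}\zeta^{\gamma}=\alpha_{\widetilde{V}_E}^{(\gamma-1)}$. The vanishing of $\tau_i^{(m)}$ and of the $\alpha_{\widetilde{V}_E}$ beyond the relevant dimensions truncates the sum to $0\le i\le n-d$, $0\le j\le n-d-i$, which is exactly (i).

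For (ii) the key geometric input is that $\widetilde{V}_E=\widetilde{V}\cap E$ is a smooth $(n-m-1)$-fold, since $\widetilde{V}$ and $E$ meet transversally in the smooth ambient $\pnt$. Transversality and the Whitney formula yield $c(T_E)\,c(T_{\widetilde{V}})=c(T_{\pnt})\,c(T_{\widetilde{V}_E})$, which inverts to
$$c_q(T_{\widetilde{V}_E})=\sum_{i=0}^{q}(-1)^{q-i}c_i(T_{\widetilde{V}})\,\zeta^{\,q-i}.$$
Substituting this into $N(\folt,\widetilde{V}_E)=\int_{\widetilde{V}_E}c_{n-m-1}(T_{\widetilde{V}_E}\otimes\ll_{\folt}^{*})$, expanding $c_1(\ll_{\folt}^{*})^{\,n-m-1-q}$ as before, and replacing $c_i(T_{\widetilde{V}})$ by $\pi^{*}(\tau_i^{(m)}\mathbf{h}^{i})$, all integrals again reduce to the $\alpha_{\widetilde{V}_E}$ data. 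Reindexing the triple sum by $q=p-1$ and discarding contributions past the relevant codimension gives (ii).

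Finally, (iii) follows by subtracting (ii) from (i): the $N(\fol,V)$ term persists, and for fixed $i,j$ the two families of $\zeta$-corrections combine so that the sum over $p$ collapses into a single $j$-th derivative. Using $\tfrac{1}{j!}\tfrac{d^{j}}{dx^{j}}x^{s}\big|_{x=\ell}=\binom{s}{j}\ell^{\,s-j}$ one recognizes
$$\sum_{p=i}^{n-m-j}\binom{n-m-p}{j}\ell^{\,\gamma_p^{j}}=\frac{\Omega^{(j)}(\ell)}{j!},\qquad \Omega(x)=x^{\,n-m-i}+\cdots+x^{\,j},$$
which is precisely the coefficient defining $\nu(\fol,V,W)$. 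I expect the main obstacle to be bookkeeping rather than ideas: keeping the signs $(-1)^{\gamma_i^{j}}$, the binomial coefficients, and the three index ranges mutually consistent across two independent Baum--Bott expansions, and verifying that the $\gamma_i^{j}=0$ part of (i) genuinely rebuilds $N(\fol,V)$ and not a truncation of it. The only non-formal ingredients---speciality forcing isolated singularities on $E$, transversality of $\widetilde{V}$ and $E$, and the blowup comparison of $c_i(T_{\widetilde{V}})$ with $\pi^{*}c_i(T_V)$---are each routine, and the remaining manipulations mirror those already carried out in the proof of Theorem \ref{nunsing1}.
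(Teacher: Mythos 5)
Your proposal follows essentially the same route as the paper's own proof: the same decomposition $\nu(\fol,V,W)=N(\widetilde{\fol},\widetilde{V})-N(\widetilde{\fol},\widetilde{V}_E)$, the same two Baum--Bott expansions on $\widetilde{V}$ and $\widetilde{V}_E$ with $c_1(\ll_{\widetilde{\fol}}^{*})=\pi^{*}c_1(\ll_{\fol}^{*})-\ell\zeta$, the same transversality/Whitney relation $c(T_E)\,c(T_{\widetilde{V}})=c(T_{\pnt})\,c(T_{\widetilde{V}_E})$ for item (ii), and the same reindexing plus the derivative identity for $\Omega^{(j)}(\ell)/j!$ in item (iii). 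The only cosmetic difference is that you flag the possible exceptional corrections to $c_i(T_{\widetilde{V}})=\pi^{*}c_i(T_V)$, a point the paper passes over silently, but this does not change the argument.
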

\begin{remark} The proof of Proposition \ref{propfin} may be obtained by Theorem \ref{theorem1} replacing $\pn$ by $V$. Indeed, let $\pi_V: \widetilde{V}\to V$
be the blowup of $V$ centered at $W$ with $E_V$ the exceptional
divisor. Naturally, $E_V=E\cap V=V_{E}$,
$\zeta_{V}=\zeta|_{V}:=c_1(N_{V_E/\widetilde{V}})$ and
$\pi_V=\pi|_{V}$. Furthermore,
 if we consider $\ell=0$ then we obtain
$\nu(\fol,V,W)= N(\fol,V)-N(\fol,W)$. In fact, for the case where
$d-m=1$ we have that $\zeta_V - \pi^*(c_1(N_{W/V})=0$. See
(\ref{equdch}). Therefore,
$$N(\widetilde{\fol},\widetilde{V}_E)= \dps\sum_{i=0}^{n-d}\int_{\widetilde{V}_E}\pi_V^*c_i(T_{\widetilde{V}_E})\pi_V^*c_1(\ll_{{\fol}}^{*})^{n-d-i}.$$
By induction, we can prove that
$$c_i(T_{\widetilde{V}_E})=\sum_{j=0}^{i}=(-1)^{i-j}\zeta_V^{i-j}\pi_V^*c_j(T_V)=\pi_V^*c_i(T_W)$$
because $c_i(T_V)|_{W}=c_1(N_{W/V})|_Wc_{i-1}(T_W)+c_{i}(T_W)$. As a
consequence, if $d-m=1$ then
$$N(\widetilde{\fol},\widetilde{V}_E)=\sum_{i=0}^{n-d}\int_{W}c_i(T_W)\cdot \pi_V^*c_1(\ll_{\fol}^{*})^{n-d-i}=N(\fol,W).$$
For the general case where $q=d-m\geq 2$, again by (\ref{equdch}) we
have that
$$\zeta_V^q -
\pi_V^*c_1(N_{W/V})+\ldots+(-1)^{q}\pi_V^*c_q(N_{W/V})=0,$$
$\alpha_{\widetilde{V}_E}^{(i)}=\int_{\widetilde{V}_E}
\pi_V^*(\mathbf{h})^{n-m-1-i}\zeta_V^{i}=0$ for $i=0,\ldots,q-2$ and
$\alpha_{\widetilde{V}_E}^{(q-1)}=(-1)^{q-1}\deg(W)$ which yields
$$N(\widetilde{\fol},\widetilde{V}_E)=\int_{\widetilde{V}_E}\sum_{i=q-1}^{n-m-1}\sum_{j=0}^{i-q+1}(-1)^{i-j}\zeta_{V}^{i-j}\pi_V^*c_j(T_V)\cdot \pi_V^*c_1(\ll_{{\fol}}^{*})^{n-m-1-i}.$$

Fixing $i\in\{q-1,\ldots,n-m-1\}$, by induction we prove that
$$\sum_{j=0}^{i-q+1}(-1)^{i-j}\zeta_{V}^{i-j}\pi_V^*c_j(T_V)=(-1)^{q-1}\pi_V^*c_{i-q+1}(T_W)\zeta_V^{q-1}+\ldots$$
which yields

$$N(\widetilde{\fol},\widetilde{V}_E)=\sum_{i=q-1}^{n-m-1}\tau_{i-q+1}^{(d)}(k-1)^{n-d-i+q-1}deg(W)=N(\fol,W)$$
for $\ell=0$.

\end{remark}

\section{Proof of Theorem \ref{principal}}
Let $\fol$ be a foliation by curves on $\pn$ of degree $k$ such that
$$
 \mbox{Sing}(\fol) = W \cup\{p_1,\ldots,p_s\},
$$
where $W=Z(h_{1},\ldots,h_{d})$ and $$V=Z(f_1,\ldots,f_m)$$ is
invariant by $\fol$ such that $f_i\in\mathcal{I}(W)$ for all $1\leq
i\leq m$. It is clear that $W\subset V$. By Lemma
\ref{lema_Theorem1}, there exists a one-parameter family of
holomorphic foliations by curves on $\pn$ given by $\{\fol_t\}_{t\in
D}$ where $D=\{t\in\C\,:\,|t|<\epsilon\}$ satisfying conditions
$(i)-(vi)$.
\par We will consider two cases,
$\ell=0$ and $\ell>0$.
If $\ell=0$, then $W$ and $V$ are $\fol_t$-invariant for all $t\in
D\setminus\{0\}$. As consequence,
$$\dps\sum_{p_j\in V}\mu(\fol,p_j)=\lim_{t\to0}\sum_{\lim p_i^t\in V}\mu(\fol_t,p_i^t)-\lim_{t\to0}\sum_{\lim p_i^t\in W}\mu(\fol_t,p_i^t).$$
Thus,
$$\dps\sum_{p_j\in V}\mu(\fol,p_j)=N(\fol_t,V)+N(\fol,A_{V})-N(\fol_t,W)-N(\fol,A_{W}).$$
Therefore,
$$\dps\sum_{p_j\in V}\mu(\fol,p_j)=N(\fol_t,V)-N(\fol,W)+N(\fol,A_{V\setminus W})$$
with $N(\fol,A_{V\setminus W})\ge0.$ Now, in the case where $\ell>0$ we get

$$\dps\sum_{p_j\in V}\mu(\fol,p_j)=\lim_{t\to0}\sum_{\lim\widetilde{p}_i^t\in \widetilde{V}}\mu(\widetilde{\fol_t},\widetilde{p}_i^t) - \lim_{t\to0}\sum_{\lim\widetilde{p}_j^t \in
\widetilde{V}_E}\mu(\widetilde{\fol_t},\widetilde{p}_j^t)$$ where
$\widetilde{V}$ is the strict transform of $V$ via $\pi$ and
$\widetilde{V}_E=\widetilde{V}\cap E$.
Then,
$$\dps\sum_{p_j\in V}\mu(\fol,p_j)=N(\widetilde{\fol_t},\widetilde{V})+N(\widetilde{\fol},A_{\widetilde{V}})-N(\widetilde{\fol_t},\widetilde{V}_E)-N(\widetilde{\fol},A_{\widetilde{V}_E}).$$
So, given that $N(\fol,A_W) \ge
N(\widetilde{\fol},A_{\widetilde{V}_E})$, we get

$$\dps\sum_{p_j\in V}\mu(\fol,p_j)\le N(\widetilde{\fol},\widetilde{V})-N(\widetilde{\fol},\widetilde{V}_E)+N(\widetilde{\fol},A_{V\setminus W}),$$
where $N(\widetilde{\fol},\widetilde{V})$ and
$N(\widetilde{\fol},\widetilde{V}_E)$ are given in Proposition
\ref{propfin}. Since
$$N(\tilde{\fol},\tilde{V})-N(\tilde{\fol},\tilde{V}_E)=N(\fol,V)+
\nu(\fol,V,W),$$ we conclude $$\sum_{p\in V\setminus
W}\mu(\fol,p)\le N(\fol,V)+\nu(\fol,V,W)+N(\fol,A_{V\setminus W}).$$
Finally, note that as in Theorem \ref{theorem1}, these expressions
coincide if $\ell=0$.

\section{Explicit computations for a family of foliations by curves }
\begin{lemma}\label{lema5} Let $\fol$ be a
foliation by curves on $\pn$ of degree $k$ such that $W\subset\sing(\fol)$
with $W=V(f_1,\ldots,f_d)$ and $k_j=\deg(f_j)=1$ for $i=1,\ldots,d$.
Assume that $\ell=mult_E(\fol)=k-1$. Then
\begin{enumerate}
\item[(i)] $\nu(\fol,W)=-(k^d+\ldots+k^n)$,
\item[(ii)] $N(\widetilde{\fol},E)=(n+1-d)(1+k+\ldots+k^{d-1}).$
\end{enumerate}
\end{lemma}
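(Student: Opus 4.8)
The plan is to exploit the linearity of $W$. Since each $\deg(h_j)=1$, the variety $W=Z(h_1,\dots,h_d)$ is a linear $\mathbb{P}^{n-d}\subset\pn$, so $\deg(W)=1$, its normal bundle is $N=N_{W/\pn}=\oo_W(1)^{\oplus d}$, and the symmetric functions in \eqref{nu_equation} degenerate to binomials: from \eqref{ccni} and \eqref{ccti} (with $c(N)=(1+\mathbf{h})^d$ and $c(T_W)=(1+\mathbf{h})^{n+1-d}$) one gets $\sigma_{a_1}^{(d)}=\binom{d}{a_1}$, $\tau_{a_2}^{(d)}=\binom{n+1-d}{a_2}$, while $\mathcal{W}_{\delta}^{(d)}=\binom{\delta+d-1}{d-1}$. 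Moreover, because $N$ is a twist of a trivial bundle, the exceptional divisor $E=\mathbb{P}(N)$ is isomorphic to the product $\mathbb{P}^{n-d}\times\mathbb{P}^{d-1}$. I would set $a=\rho^*(\mathbf{h}|_W)$ and $b$ the hyperplane class of the $\mathbb{P}^{d-1}$--factor, so that $A(E)=\mathbb{Z}[a,b]/(a^{n-d+1},b^{d})$ with $\int_E a^{n-d}b^{d-1}=1$. Feeding $c(N)=(1+a)^d$ into the Grothendieck relation \eqref{equdch} yields $(\zeta-a)^d=0$, and comparing with \eqref{cce} pins down the sign as $\zeta=a-b$.

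With this dictionary, part (ii) is a direct Baum--Bott computation on the product $E$. Since $\ell=k-1$ and $c_1(\ll_{\fol}^*)=(k-1)\mathbf{h}$, the restriction simplifies to $c_1(\ll_{\widetilde{\fol}}^*)|_E=(k-1)a-\ell\zeta=(k-1)b$. Writing $T_E=T_{\mathbb{P}^{n-d}}\oplus T_{\mathbb{P}^{d-1}}$ and passing to Chern roots, the top Chern class of $T_E\otimes\ll_{\widetilde{\fol}}^*$ factors as $\prod_i(x_i+(k-1)b)\prod_j(y_j+(k-1)b)$. The second factor evaluates to $\big(\tfrac{(1+(k-1))^d-1}{k-1}\big)b^{d-1}=(1+k+\cdots+k^{d-1})b^{d-1}$, while in the first factor only the $a^{n-d}$ term survives after multiplication by $b^{d-1}$ and integration, contributing $\binom{n-d+1}{n-d}=n+1-d$. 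Hence $N(\widetilde{\fol},E)=(n+1-d)(1+k+\cdots+k^{d-1})$, as claimed.

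For part (i), the plan is to substitute the values above directly into the definition \eqref{nu_equation} of $\nu(\fol,W)$ and collapse the nested sum by generating functions. Using the identity $\tfrac{\varphi_a^{(m)}(\ell)}{m!}=\sum_{j=|a|}^{n}\binom{n-j}{m}\binom{d-a_1}{j-|a|}\ell^{n-j-m}$ from the proof of Theorem~\ref{nunsing1}, together with the crucial simplification $\ell^{n-j-m}(k-1)^m=(k-1)^{n-j}$ coming from $\ell=k-1$, the $m$--summation becomes the alternating Vandermonde sum $\sum_m(-1)^m\binom{n-j}{m}\binom{n-|a|-1-m}{d-1}=\binom{j-|a|-1}{\,n-d-|a|\,}$. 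Setting $t=j-|a|$, the $t$--sum is recognized as a coefficient extraction $[w^{r}]\tfrac{(k+w)^{d-a_1}}{1+w}$; the $a_2$--sum then folds into a factor $(1+w)^{n+1-d}$, and the $a_1$--sum collapses by the binomial theorem to the constant $k^d$. What remains is a single coefficient extraction
\[
-\nu(\fol,W)=k^{d}\,[w^{\,n-d}]\,\frac{(1+w)^{n+1-d}}{1-(k-1)w}=k^{d}(1+k+\cdots+k^{n-d}),
\]
which is exactly $k^d+k^{d+1}+\cdots+k^n$, giving $\nu(\fol,W)=-(k^d+\cdots+k^n)$.

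The main obstacle is purely organizational: part (i) is a four-fold nested sum, and the work lies in performing the summations in the right order and recognizing each collapse, especially the final binomial-theorem reduction of the $a_1$--sum to $k^d$. An alternative route avoiding this bookkeeping is to compute $N(\widetilde{\fol},\pnt)$ geometrically on the blow-up and invoke the relation $\nu(\fol,W)=N(\widetilde{\fol},\pnt)-N(\widetilde{\fol},E)-\sum_{i=0}^n k^i$ implicit in Theorem~\ref{nunsing1}; one checks as a consistency test that this forces $N(\widetilde{\fol},\pnt)=(n+2-d)(1+k+\cdots+k^{d-1})$. However, that route trades the combinatorial collapse for the evaluation of the $\iota_*\beta_{j-1}$ correction terms via \eqref{fcc} and \eqref{equslf}, which is comparably involved, so I would carry out the self-contained generating-function collapse described above.
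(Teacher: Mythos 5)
Your proposal is correct, but it takes a genuinely different route from the paper's, most markedly for part (ii). The paper proves both items by specializing its general machinery: it substitutes $\sigma_i^{(d)}=\binom{d}{i}$, $\tau_i^{(d)}=\binom{n+1-d}{i}$, $\mathcal{W}_\delta^{(d)}=\binom{\delta+d-1}{d-1}$ into (\ref{nu_equation}) and (\ref{nse}), packages the Whitney sum through the auxiliary polynomial $\Psi_n(x)=(1+x)^d x^{n+1-d}$, and then proves (i) by \emph{induction on $n$}, deriving the recursion $\nu_{n+1}(\fol,W)=\nu_n(\fol,W)-k^{n+1}$ from the single evaluation $\sum_{i,j}\frac{\ell^i\Psi_n^{(i+j)}(\ell)}{i!\,j!}\binom{n-i-j}{d-1}=(1+\ell)^{n+1}=k^{n+1}$; item (ii) is then handled "in a similar way" from (\ref{nse}). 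You instead prove (ii) geometrically: since $N_{W/\pn}=\oo_W(1)^{\oplus d}$, the exceptional divisor is the product $\P^{n-d}\times\P^{d-1}$ with $\zeta=a-b$, and with $\ell=k-1$ the class $c_1(\ll_{\widetilde{\fol}}^*)|_E$ collapses to $(k-1)b$, so the Baum--Bott integral factors as $(n+1-d)\cdot\frac{k^d-1}{k-1}$ --- shorter and more illuminating than the paper's combinatorial route, since it explains structurally why the answer is a product; and you prove (i) by a direct coefficient-extraction collapse of the nested sum rather than induction, ending with the correct identity $k^d\,[w^{n-d}]\,(1+w)^{n+1-d}/(1-(k-1)w)=k^d(1+k+\cdots+k^{n-d})$ (this checks out numerically, e.g. $n=3$, $d=2$, $k=2$ gives $-\nu=12=k^2+k^3$, and your consistency value $N(\widetilde{\fol},\pnt)=(n+2-d)(1+k+\cdots+k^{d-1})$ agrees with (\ref{nts}) in the paper's last section). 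One caveat you should flag in a final write-up of (i): the Vandermonde evaluation $\sum_m(-1)^m\binom{n-j}{m}\binom{n-|a|-1-m}{d-1}=\binom{j-|a|-1}{n-d-|a|}$ is convention-sensitive at the boundary terms $j=|a|$, where the right-hand side must be read with the negative-binomial convention $\binom{-1}{q}=(-1)^q$ rather than the paper's stated convention that $\binom{p}{q}=0$ whenever $p<q$; your generating-function formalism handles this consistently (and the end result is right), but the paper's induction sidesteps the issue entirely, which is essentially what its approach buys in exchange for being less transparent.
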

\begin{proof}
Given that $k_1=\ldots=k_d=1$, $c(N_{W/\pn})=\sum
\sigma_i^{(d)}\mathbf{h}^i$, $\sigma_i^{(d)}={{d}\choose{i}}$,
$c(T_W)=\sum \tau_i^{(d)}\mathbf{h}^i$,
$\tau_i^{(d)}={{n+1-d}\choose{i}}$ and
$\mathcal{W}^{(d)}_{\delta}:=\mathcal{W}^{(d)}_{\delta}(k_1,\ldots,k_{d})={{\delta+d-1}\choose{d-1}}$.
To prove (i) it is enough to observe that

$$\sum_{a_1+a_2=j}\varphi_{a}(x)\sigma_{a_1}^{(d)}\tau_{a_2}^{(d)}=\frac{\Psi_{n}^{(j)}(x)}{x\cdot
j!}$$ where $\Psi_{n}(x)=(1+x)^d\cdot x^{n+1-d}$. Then,

$$\nu_n(\fol,W)=
-\sum_{m=0}^{n-d}\sum_{j=0}^{n-d-m}\sum_{i=0}^{m}(-1)^{\delta_{i}^{j}}\frac{\ell^{i-1}\Psi_{n}^{(i+j)}(\ell)}{i!j!}{{n-m-j-1}\choose{d-1}},$$
with $\delta_{i}^{j}=n-d-i-j$.  For fixed $d$, we obtain the
relation

$$\nu_{n+1}(\fol,W)=\nu_n(\fol,W)-\sum_{i=0}^{n+1-d}\sum_{j=0}^{n+1-d-i}\frac{\ell^{i}\Psi_{n}^{(i+j)}(\ell)}{i!j!}{{n-i-j}\choose{d-1}}.$$
Result (i) follows because $\nu_d(\fol,W)=k^d$ and
$$\sum_{i=0}^{n+1-d}\sum_{j=0}^{n+1-d-i}\frac{\ell^{i}\Psi_{n}^{(i+j)}(\ell)}{i!j!}{{n-i-j}\choose{d-1}}=(1+\ell)^{n+1}=k^{n+1}.$$
The proof of (ii) is done in a similar way using (\ref{nse}).
\end{proof}

\begin{example} Let $\fol_k$ be the holomorphic foliation by curves
of degree $k$ defined in $\pn$, $n\ge3$, such that in the affine
open $U_n=\{[\xi_0:\cdots:\xi_n]\in \pn, \xi_n\ne 0\}$ is described
by the following vector field
$$X_{\fol_k}=\sum_{i=1}^{n}\sum_{|a|=m_i}\vartheta_{a,i}(x)x_1^{a_1}\cdots x_d^{a_d}\frac{\partial}{\partial x_i}:=\sum_{i=1}^{n}P_i(x)\frac{\partial}{\partial x_i},
\quad x_i=\frac{\xi_{i-1}}{\xi_n}$$ where $2\le d\le n-1$,
$a=(a_1,\ldots, a_d)$, $m_1=\ldots=m_d=m_{d+1}+1=\ldots=m_{n}+1=k$,
$\vartheta_{a,i}$ is a constant for  $i=1,\ldots,d$ and
$\vartheta_{a,i}$ is an affine linear function for $i=d+1,\ldots,n$.
If $k\ge2$ then the set $W_0\cong\P^{n-d}$ given by
$\xi_0=\ldots=\xi_{d-1}=0$ is contained in the singular locus of
$\fol_k$. Otherwise, if $k=1$ then $W_0$ is an invariant set by
$\fol_k$ since $m_i=0$, for $d < i \leq n$. Let us assume that
$$W_0=Z(P_1(\xi),\ldots,P_d(\xi)).$$
This hypothesis is essential in order that $W_0$ be an irreducible
component of $\sing(\fol_k)$. As before, for $k\ge2$, let
$\pi:\pnt\to\pn$ be the blowup of $\pn$ at $W_0$ with $E$ is the
exceptional divisor. It is not difficult to see that $\deg(W_0)=1$
and $\ell=m_{E}(\fol_k,W_0)=k-1$.

If $\sing(\fol_k)=W_0\cup\{p_1,\ldots,p_r\}$ then
\begin{equation}\label{ns}
\sum_{j=1}^{r}\mu(\fol_k,p_j)=1+k+\ldots+k^{d-1}\end{equation} for
all $k\ge 2$. In fact, for $k\ge2$ and $n=d+1$, i.e.; $W_0$ is a
curve, we will consider $\pn=U_n\cup H_n^{\infty}$ where
$H_n^{\infty}$ is the hyperplane at infinity. Let us define
$\fol_k^{(1)}=\fol_k|_{H_n^{\infty}}$ and $p_{\infty}=W_0\cap
H_n^{\infty}$. It is not difficult to see that
$H_n^{\infty}\cong\P^{n-1}$ is an invariant set of $\fol_k$ and
$\fol_k^{(1)}$ is a holomorphic foliation in $H_n^{\infty}$ of
degree $k$ too. Furthermore, given that
$\mu(\fol_k^{(1)},p_{\infty})=k^{n-1}$,

$$\sum_{j=1}^{r}\mu(\fol_k,p_j)=\sum_{j=1}^{r}\mu(\fol_k^{(1)},p_j)=\sum_{j=0}^{n-1}k^j-k^{n-1}=\sum_{j=0}^{d-1}k^j.$$
Therefore, (\ref{ns}) is true for $n=d+1$. Now, by the hypothesis of
induction let us assume that the result is true for $n>d$ where $d$
is fixed. Let us consider $\fol_k$ defined in $\P^{n+1}$ and special
along $W_0\subset\P^{n+1}$ with $cod(W_0)=d$. Thus,
$\P^{n+1}=U_{n+1}\cup H_{n+1}^{\infty}$. Again,
$H_{n+1}^{\infty}\cong \pn$ is an invariant set of $\fol_k$.
Defining $\fol_k^{(1)}=\fol_k|_{H_{n+1}^{\infty}}$ and $W_1=W_0\cap
H_{n+1}^{\infty}$ we obtain a holomorphic foliation by curves in
$H_{n+1}^{\infty}$ of degree $k$ and special along $W_1$ too. Given
that $W_1$ in $H_{n+1}^{\infty}$ also has the codimension $d$, we
get
$$\sum_{j=1}^{r}\mu(\fol_k,p_j)=\sum_{j=1}^{r}\mu(\fol_k^{(1)},p_j)=1+k+\ldots+k^{d-1}$$
by hypothesis of induction. This number agrees with the Theorem
\ref{theorem1} taking $\ell=k-1$ and $\deg(W_0)=1$. See the Lemma
\ref{lema5}. For $k=1$, the equation (\ref{ns}) may be interpreted
as follows: let us consider the singular points $p_j\notin W_0$ then

$$\sum_{p_j\in\pn\setminus W_0}\mu(\fol_k,p_j)=\sum_{p_j\in\pn}\mu(\fol_k,p_j)-\sum_{p_j\in
W_0}\mu(\fol_k,p_i).$$

Given that $W_0\cong\P^{n-d}$ is invariant by $\fol_k$, we get

$$\sum_{p_j\notin W_0}\mu(\fol_k,p_j)=n+1-(n-d+1)=d.$$

Again for $k\ge2$, by the Lemma (\ref{lema5}), we obtain that
\begin{equation}\label{nts}
N(\widetilde{\fol}_k,\pnt)=\sum_{j=1}^{r}\mu(\fol_k,p_j)+N(\widetilde{\fol}_k,E)=(n+2-d)\sum_{j=0}^{d-1}k^j.
\end{equation}

Now, let us consider $V_{\lambda}\subset\pn$ a $(n-d+1)$-plane
defined by $V_{\lambda}=\{\xi_j=\lambda_j\xi_0, j=1,\ldots,d\}$,
$\lambda=(\lambda_1,\ldots,\lambda_d)\in\C^{d-1}$. Thus, $W_0\subset
V_{\lambda}$ and $cod(V_{\lambda})=d-1$. In order to $V_{\lambda}$
be invariant by $\fol_k$ the parameter $\lambda$ must be a singular
point of a holomorphic foliation by curves of degree $k$ on
$\P^{d-1}$. Consequently, the number of $(n-d+1)$-plane
$V_{\lambda}$ invariant by $\fol_k$ is $1+k+\ldots+k^{d-1}$. Let $V$
one of them. By (\ref{nts}), we obtain that
$N(\widetilde{\fol}_k,\widetilde{V})=(n+2-d)$ where $\widetilde{V}$
is the strict transform of $V$ by $\pi^{-1}$. Furthermore, by Lemma
(\ref{lema_Theorem1}) we get

$$N(\widetilde{\fol}_k,\widetilde{V}\cap E)=n+1-d.$$
These numbers agree with Proposition (\ref{propfin}) taking
$\deg(V)=1$ and $\ell=k-1$ since
$\alpha_{V}^{(0)}:=\dps\int_{\widetilde{V}}\pi^*(\mathbf{h})^{n-d}=1$
and
$\alpha_{V_E}^{(i)}:=\int_{V_E}\pi^*(\mathbf{h})^{n-d-i-1}\cdot\zeta^i=1$
for $i\ge0$.

\noindent Now, let $\mathcal{G}_k$ be an one-dimensional holomorphic
foliation of degree $k$ in $\pn$ also described in $U_n$ by the
vector field
\begin{equation}\label{camg}
X_{\mathcal{G}_k}=\sum_{i=1}^{n}\sum_{|a|=k}c_{a,i}x_1^{a_1}\cdots
x_d^{a_d}\frac{\partial}{\partial x_i}=
\sum_{i=1}^{n}Q_i(x)\frac{\partial}{\partial x_i}
\end{equation}
where $x_i=\xi_{i-1}/\xi_n$, $a=(a_1,\ldots,a_d)$ and $c_{a,i}$ a
constant. Thus, $W_0\subset \sing(\mathcal{G}_k)$ and
$\ell=m_{E}(\pi^*\mathcal{G}_k)=k-1$. We have two different
situations according to $\ell=0$ or $\ell \ge 1$. We start our
analysis considering $\ell=0$, i.e.; $\mathcal{G}_k$ has degree one.
Let $\fol_t$ be a holomorphic foliation on $\pn$ given in the Lemma
\ref{lema_Theorem1}, for $t\in D\setminus\{0\}$, $D=D(0,\epsilon)$,
with $\epsilon$ sufficiently small,

$$X_{\fol_t}=\sum_{i=1}^{n}\sum_{j=1}^{d}c_{i,j}x_j\frac{\partial}{\partial
x_i}+t\sum_{i=1}^{n}Y_i(x)\frac{\partial}{\partial x_i}$$ where

$$Y_i=\left\{\begin{array}{ll}
              \sum_{j=1}^{d}b_{ij}x_j,& \mathrm{for}\quad 1\le i\le
              d\cr
              b_{i0}+\sum_{j=1}^{n}b_{ij}x_j,& \mathrm{for}\quad d+1\le i\le
              n
              \end{array}\right.$$
\end{example}
\noindent with $b_{ij}\in\C$. Let $A_t=(a_{ij}^{t})_{1\le i,j\le d}
\in M_{\C}(d,d)$ be $d\times d$ matrix given by
$a_{ij}^{t}=c_{ij}+tb_{ij}$. We can admit $\mathrm{Det}(A_t) \ne 0$
and $A_t$ has $d$ distinct eigenvalues for all $t\in
D\setminus\{0\}$ which yields that the only singular point $p_n \in
\sing(\fol_t)\cap U_n$ is given by
$p_n=(0,\ldots,0,p_{d+1,n},\ldots,p_{n,n})$ where
$(p_{d+1,n},\ldots,p_{n,n})$ is the solution of linear system
$b_{i0}+\sum_{j=d+1}^{n}b_{ij}x_j=0$ for $d+1\le i\le n$, for all
$t\in D\setminus\{0\}$. In the affine open $U_{n-1}\subset \pn$ with
coordinate $(y_i)$, $y_i={\xi_{i-1}}/{\xi_{n-1}}$ for $1\le i \le
n-1$ and $y_n={\xi_{n}}/{\xi_{n-1}}$, $\fol_t$ is described by the
vector field

$$X_{\fol_t}=\sum_{i=1}^{n}Q_i^t(y)\frac{\partial}{\partial
y_i}$$ where

$$Q_i^t(y)=\left\{\begin{array}{ll}
               \sum_{j=1}^{d}(c_{ij}+tb_{ij})y_j-y_ig_n(y),& \mathrm{for}\quad 1\le i\le
              d\cr
               g_i(y)-y_ig_n(y),& \mathrm{for}\quad d+1\le i\le
              n-1\cr
              -y_ng_n(y),& \mathrm{for}\quad i=n
              \end{array}\right.
$$
with
$g_i(y)=\sum_{j=1}^{d}c_{ij}y_j+t(b_{i0}y_n+b_{in}+\sum_{j=1}^{n-1}b_{ij}y_j$.
In order to determine the singular points of $\fol_t$ we need to
consider $y_n=0$ or $g_n(y)=0$. It is not possible to have these two
conditions satisfied simultaneously because they would result in an
inconsistent linear system. Namely, $g_i(y)=0$, for $d+1\leq i \le
n$ and $y_1=\ldots=y_d=y_n=0$, i.e.; the number of equations is
greater than the number of unknown variables.  Note that condition
$g_n(y)=0$ results in the singular point $p_n\in U_{n}\cap U_{n-1}$.
Therefore, we can only consider $y_n=0$ and $g_n(y)\ne 0$. For all
$t\in D\setminus\{0\}$, the point $p_{n-1}\in \sing(\fol_t)\cap
U_{n-1}$ where
$p_{n-1}=(0,\ldots,0,p_{n-1,d+1},\ldots,p_{n-1,n-1},0)$ such that
$(p_{n-1,d+1},\ldots,p_{n-1,n-1})$ is the solution of the linear
system $b_{in}+\sum_{j=d+1}^{n-1}b_{ij}y_j=0$ for $d+1\le i \le
n-1$. This way, in each an affine open $U_i$, for $i=d,\ldots, n$ we
get one singular point $p_i\in W_0\cap U_i$. Therefore,
$\mu(\fol_k,W_0)\ge n-d+1$. The other singular points
$p_m^t=(y_1^t,\ldots,y_{n-1}^t,0)$ of $\fol_t$ are obtained as
follows: $(y_1^t,\ldots,y_{d}^t)$ is the eigenvector of $A_t$
associated to eigenvalue $\lambda_m^t$. Thus,
$Q_i^t(p_m^t)=y_i^t(\lambda_m^t-g_n(p_m^t))=0$ for $i=1,\ldots,d$.
which gives $\lambda_m^t=g_n(p_m^t)$. The other components of
$p_m^t$ are obtained from the linear system
$\sum_{j=1}^{d}c_{ij}y_j^t+t(b_{in}+\sum_{j=1}^{n-1}b_{ij}y_j^t)=0$,
for $d+1\le i \le n-1$. Without loss of generality, we can suppose
that $y_1^t\ne 0$ which results in

$$y_1^t = \frac{\lambda_m^t -
tb_{nn}}{\sum_{j=1}^{d}c_{nj}u_j^t+t(\sum_{j=1}^{n-1}b_{nj}u_j^t)}$$
where $u_j^t = y_j^t/y_1^t$.

Consequently, we obtain $d$ points on $p_m^t\in\pn\setminus W_0$,
counted with multiplicity. However,

$$\lim_{t\to0}y_1^t =\frac{\lambda_m^0}{c_{n1}+\sum_{j=2}^{d}c_{nj}u_j^0}$$
where $\lambda_m^0$ is a eigenvalue of $A_0$. Therefore,
$p_{n-1}=\lim_{t\to0}p_m^t$ is an embedded point of $W_0$ if only if
$\lambda_m^0=0$, i.e.; $\mathrm{det}(A_0)=0$.  Let $P_{A_0}(\lambda)
=Det(A_0-\lambda I)=\lambda^{q}p_0(\lambda)$ with $q\ge 0$ and
$p_0(0)\ne0$. It follows that $W_0$ has $q$ embedded closed points,
counted with multiplicity. However, $q\le d-1$, we have that
\begin{equation}
\label{ulte} n-d+1 \le \mu(\mathcal{G}_k,W_0)=n-d+1+q \le n
\end{equation}
for $k=1$. At this point, we will analyze the case where $\ell\ge
1$. At now, $W_0$ is a nondicritical of type (iii) component of
$\sing(\mathcal{G}_k)$. Let $\mathcal{G}_t$ be a holomorphic
foliation by curves as given in Lemma \ref{lema_Theorem1}. Again, in
the affine open $U_n$, $\mathcal{G}_t$ is described by the following
vector field

$$X_{\mathcal{G}_t}=\sum_{i=1}^{n}Q_i(x)\frac{\partial}{\partial
x_i}+t\sum_{i=1}^{n}G_i(x)\frac{\partial}{\partial
x_i}=\sum_{i=1}^{n}G_i^t(x)\frac{\partial}{\partial x_i}$$ where

$$G_i(x)=\left\{\begin{array}{ll}
\dps\sum_{|a|=k}b_{a,i}x_1^{a_1}\cdots x_d^{a_d},&\mathrm{for}\quad
1\le i\le d\cr \dps\sum_{|a|=k-1}\upsilon_{a,i}(x)x_1^{a_1}\cdots
x_d^{a_d},&\mathrm{for}\quad d+1\le i\le n
                 \end{array}\right.
                 $$
with
$\upsilon_{a,i}(x)=\varrho_{a,i,0}+\sum_{j=1}^{n}\varrho_{a,i,j}\cdot
x_j$ is an affine linear function and $a=(a_1,\ldots,a_d)$.

Even if the foliation $\mathcal{G}_k$ has an invariant variety
$V=Z(f_1,\ldots,f_d)$ such that $f_i\in\mathcal{I}(W)$, we can
choose the parameters $b_{a,i}$ in order that $\mathcal{G}_t$ also
has $V$ as invariant set and $W_0=Z(G_1^t(\xi),\ldots,G_d^t(\xi))$
for $t\in D\setminus\{0\}$.

In the chart $\widetilde{U}_1$ with coordinate $\varsigma\in\C^n$
such that $\sigma_1(u)=x$, we obtained the induced foliation
$\widetilde{\mathcal{G}}_t$  by $\mathcal{G}_k$ via blowup $\pi$ as
follows
$$X_{\widetilde{\mathcal{G}}_t}=u_1\widetilde{G}_1^t(u)\frac{\partial}{\partial
u_1}+\sum_{i=2}^{d}[\widetilde{G}_i^t(u)-u_i\widetilde{G}_1^t(u)]\frac{\partial}{\partial
u_i}+\sum_{i=d+1}^{n}\widetilde{G}_i^t(u)\frac{\partial}{\partial
u_i}$$ where

$$\widetilde{G}_i^t(u)=\left\{\begin{array}{ll}
                               \dps\sum_{|a|=k}(c_{a,i}+tb_{a,i})u_2^{a_2}\cdots u_d^{a_d},&\mathrm{for}\quad
                               i=1,\ldots,d\cr
                               t\dps\sum_{|a|=k-1}\widetilde{\upsilon}_{a,i}(u)u_2^{a_2}\cdots u_d^{a_d}+u_1\widetilde{Q}_i(u),&\mathrm{for}\quad
                               i=d+1,\ldots,n,
                               \end{array}\right.
$$
with
$\widetilde{\upsilon}_{a,i}(u)=\varrho_{a,i,0}+\sum_{j=d+1}^{n}\varrho_{a,i,j}u_j$
and some $\widetilde{Q}_i$. For $t\in D\setminus\{0\}$, it is not
difficult to see that there exist $1+k+\ldots+k^{d-1}$ singular
points, counted with multiplicities, on $\widetilde{U}_1\cap E$.
Note that the exceptional divisor $E$ is defined in
$\widetilde{U}_1$ by $u_1=0$ and the system
$\widetilde{G}_i^t(u)-u_i\widetilde{G}_1^t(u)=0$, for $i=2,\ldots,d$
has $1+k+\ldots+k^{d-1}$ solutions, counted with multiplicities.
Furthermore,
$\sing(\widetilde{\mathcal{G}}_t)\cap\widetilde{U}_1\setminus
E=\emptyset$ because $W_0=Z(Q_i(\xi)+tY_i(\xi))$, $i=1,\ldots,d$; $t\neq 0$.

In the the affine open $U_{n-1}\subset \pn$, with coordinate
$y=(y_i)\in \C^n$, with $y_i=x_i/x_n$ for $1\le i \le n-1$ and
$y_n=1/x_n$, we get

$$X_{\mathcal{G}_t}=\sum_{i=1}^{d}\big(G_i^t(y)-y_iH_n(y)\big)\frac{\partial}{\partial y_i}+\sum_{i=d+1}^{n-1}\big(H_i(y)-y_iH_n(y)\big)\frac{\partial}{\partial y_i}-y_nH_n(y)\frac{\partial}{\partial y_n} $$
where
$$H_i(y)=t\sum_{|a|=k-1}\widetilde{\kappa}_{a,i}(y)y_1^{a_1}\cdots y_d^{a_d}+ \widetilde{Q}_i(y),$$
and
$\widetilde{\kappa}_{a,i}(y)=\varrho_{a,i,0}y_n+\varrho_{a,i,n}+\sum_{j=1}^{n-1}\varrho_{a,i,j}\cdot
y_j$. Again, in the chart $\widetilde{U}_1$ with coordinate
$u\in\C^n$ such that $y=\sigma_1(u)$ we get the vector field which
describes the induced foliation $\widetilde{\mathcal{G}}_t$ in
$\widetilde{U}_1$ as follows
$$
\begin{array}{ll}
\dps X_{\widetilde{\mathcal{G}}_t}=&
u_1\big(\widetilde{G}_1^t(u)-\widetilde{H}_n(u)\big)\frac{\partial}{\partial
u_1}+\dps\sum_{i=2}^{d}\big(\widetilde{G}_i^t(u)-u_i\widetilde{G}_1^t(u)\big)\frac{\partial}{\partial
u_i}+\cr
&\dps\sum_{i=d+1}^{n-1}\big(\widetilde{H}_i^t(u)-u_i\widetilde{H}_1^t(u)\big)\frac{\partial}{\partial
u_i}-u_n\widetilde{H}_n(u)\frac{\partial}{\partial u_n}
\end{array}
$$
where
$$\widetilde{G}_i^t(u)=\sum_{|a|=k}(c_{a,i}+tb_{a,i})u_2^{a_2}\ldots u_d^{a_d}$$

\noindent and

$$\widetilde{H}_i^t(u)=u_1\sum_{|a|=k}(c_{a,i})u_2^{a_2}\ldots u_d^{a_d}+t\sum_{|a|=k-1}\widetilde{\kappa}_{a,i}(\sigma_1(u))u_2^{a_2}\ldots u_d^{a_d}.$$

In order to determine the singular points of $\widetilde{G}_t$ in
$\widetilde{U}_1$, we need to analyze two conditions $u_n=0$ and
$\widetilde{H}_n(u)=0$. If $u_n=0$ and $\widetilde{H}_n(u)=0$ then
we would have a inconsistent system, a namely,
$\widetilde{H}_i(u)=0$ for $i=d+1,\ldots,n$ i.e.; more equations
than unknown variables. If $u_n\ne0$ and $\widetilde{H}_n(u)=0$ we
would have the same singularities obtained when we analysis the
foliation $\widetilde{G}_t$ in $U_n$. Therefore, it is enough to
consider only the case where $u_n=0$.

On the exceptional divisor $E$ which on $\widetilde{U}_1$ is defined
by $u_1=0$ we obtain $1+k+\ldots+k^{d-1}$ more singular points,
counted with multiplicities. In fact, the system
$\widetilde{G}_i^t(u)-u_i\widetilde{G}_1^t(u)=0$, for $i=2,\ldots,d$
admits $1+k+\ldots+k^{d-1}$. Thus, with each solution obtained, we
will solve the system $\widetilde{H}_i(u)-u_i\widetilde{H}_n(u)=0$
for $i=d+1,\ldots,n-1$. Consequently, after we consider each affine
open $U_i$, for $i=d,\ldots,n$, we obtain the total number
$(n-d+1)(1+k\ldots+k^{d-1})$ of singularities on $E$ which agrees
with Theorem \ref{theorem1}. Again, see Lemma \ref{lema5}. The
singular points $\widetilde{p}_m^t$ that are not contained in the
exceptional divisor $E$ are obtained as follows:
$\widetilde{p}_m^t=(u_1^t,\ldots,u_{n-1},0)$ where
$(u_2^t,\ldots,u_d^t)$ is a solution of the system
$\widetilde{G}_i^t(u)-u_i\widetilde{G}_1^t(u)=0$, for $i=2,\ldots,d$
and

$$u_1^t =
\frac{\dps\widetilde{G}_1^t(u)-t\sum_{|a|=k-1}\widetilde{\kappa}_{a,i}(\sigma_1(u))u_2^{a_2}\ldots
u_d^{a_d}}{\dps\sum_{|a|=k}c_{a,n}u_2^{a_2}\ldots u_d^{a_d}}.$$
Thus, after we obtain $(u_i^t)$, for $i=1,\ldots,d$ we get the
others components of $\widetilde{p}_m^t$ solving the system
$H_i^t-u_iH_n(u)=0$ for $i=d+1\ldots,n-1$.

Therefore, $p_m^t=\sigma_1(\widetilde{p}_m^t)$ is a singular point
of $\widetilde{G}_t$. By the same argument, if $t\in
D\setminus\{0\}$ then the foliation $\widetilde{\mathcal{G}}_t$
admits $1+k+\ldots+k^{d-1}$ singular isolated points, counted with
multiplicities.

$$\mu(\mathcal{G},W_0)\ge
\dps\sum_{i=0}^{n}k^i-\dps\sum_{i=0}^{d}k^i=\dps\sum_{i=d}^{n}k^i.$$

However, $\displaystyle\lim_{t\to0}p_m^t\in W_0$ if only if $\widetilde{G}_i^0=0$
for $i=1,\ldots,d$. That is, $Q_1(\xi)=\ldots=Q_d(\xi)=0$ admits a
nontrivial solution. Thus, if the multivariate resultant
$Res(Q_1,\ldots,Q_d)=0$ then $W_0$ has an embedded closed point.
Given that all functions $Q_i$ are not identically null, the upper
bound of number of embedded points $W_0$ is $k+\ldots+k^{d-1}$.
Comparing with (\ref{ulte}), we obtain that

$$\dps\sum_{i=d}^{n}k^i\leq\mu(\mathcal{G},W_0)=\dps\sum_{i=d}^{n}k^i + q\leq \dps\sum_{i=1}^{n}k^i$$
where $q$ is the number of embedded closed points of $W_0$, for all
$k\ge 1$.
\vspace{1cm}

\paragraph{\bf {Acknowledgments.}}
We would like to acknowledge to Renato Vidal Martins (UFMG) and John
MacQuarrie (UFMG) for the suggestions and corrections to improve the
paper.


\begin{thebibliography}{99}

\bibitem{BB}
P. Baum, R. Bott: {\em On the zeros of meromorphic vector-fields.} Essays on Topology and Related topics, M\'emoires d\'de\'es \`a Georges de Rham, Springer-Verlag, Berlim, 1970, 29-47.
\bibitem{correa}
M. Corr\^ea JR, A. Fern\'andez-P\'erez, G. Nonato Costa, R. Vidal
Martins: \emph{Foliations by curves with curves as singularities}.
Annales de l'institut Fourier, Volume 64 (2014) no. 4, 1781-1805.
%\bibitem{Fernandez}
%A. Fern\'andez-P\'erez, G. Nonato Costa, R. Rosas: \emph{Milnor number for non-isolated singularities of holomorphic foliations}. In preparation.

\bibitem{WF}
W. Fulton: {\em Intersection Theory.} Springer-Verlag Berlin Heidelberg, 1984.
%\bibitem{Gom2}
%X. G\'omez-Mont: {\em Holomorphic foliations in ruled surfaces.} Trans. American Mathematical Society, {\bf 312} (1989) 179-201.
\bibitem{GK}
X. Gomez-Mont, G. Kempf, \emph{Stability of meromorphic vector fields in projective spaces}. Comment. Math. Helv. \textbf{64 }(1989), 462-473.
\bibitem{GH}
P. Griffiths, J. Harris : {\em Principles of Algebraic Geometry.}
John Wiley\& Sons, Inc. 1994.
\bibitem{La}
R. Lazarsfeld:\emph{ Positivity in algebraic geometry, I, II},
Springer, 2004.
\bibitem{lins}
A. Lins Neto, M. G. Soares: {\em Algebraic solutions of one-dimensional foliations.} J. Differential Geom. 43 (1966), 652-673.
%\bibitem{Loin}
%E. Looijenga: {\em Isolated singular points on complete intersections.}
%London Mathematical Society Lectures Notes Series, 77. Cambridge University Press, Cambridge, 1984.
\bibitem{poincare}
H. Poincar\'e: {\em Sur l'Int\'egration Alg\'ebrique des \'Equations Differentielles du Premier Ordre et du Premier Degr\'e}. Rendiconti del Circolo Matematico di Palermo, 5 (1891), 161-191.
\bibitem{IP}
I.R. Porteous: {\em Blowing up Chern class.} Proc. Cambridge Phil.
Soc. {\bf 56} (1960), 118-124.
\bibitem{sancho}
F. Sancho Salas: {\em Number of singularities of a foliation on
$\P^n$.} Proceedings of the American Mathematical Society, {\bf 130}
(2001), 69-72
%\bibitem{Suw}
%T. Suwa: {\em Indices of vector fields and residues of singular
%holomorphic foliation.} Hermann, 1998.
%\bibitem{VLS} V. Cavalier, D. Lehmann and Marcio G. Soares. {\em
%Classes de Chern des Ensembles Analytiques.} Springer 2005.
\bibitem{toulouse} Gilcione Nonato Costa: {\em Holomorphic foliations by
curves on $\mathbb{P}^{3}$ with non-isolated singularities.} Ann. Fac. Sci. Toulouse, Math. (6) 15, no. 2 (2006), 297-321.
\bibitem{indices} Gilcione Nonato Costa: {\em Indices Baum-Bott for curves of singularities.} Bulletin of the Brazilian Mathematical
Society, (2016), Volume 47, Issue 3, 883-910.
\bibitem{milnor}
J. Milnor: {\em Singular points of complex hypersurfaces.} Annals of
Mathematics Studies Vol. 61 Princeton Univ. Press, Princeton, N. J.,
1968.
\bibitem{soares}
Marcio G. Soares: {\em Projective varieties invariant by one-dimensional foliations}. Annals of Mathematics, 152 (2000), 369-382.
\bibitem{inventions}
Marcio G. Soares: {\em The Poincar\'e problem for hypersurfaces invariant by one-dimensional foliations.} Invent. Math. 128 (1997), 495-500.
\bibitem{bound}
Marcio G. Soares: {\em Bounding Poincar\'e-Hopf indices and Milnor numbers.} Math. Nachr. 278, no. 6, (2005), 703-711.
%\bibitem{SS2} J. Seade and T. Suwa, {\em A residue formula for the
%index of a holomorphic flow.} Math. Ann. 304 (1996), 621-634.
\bibitem{israel}
Israel Vainsencher: {\em Foliations singular along a curve}. Trans. London Math. Soc. 2 (1), (2015), 80-92.

\end{thebibliography}
\end{document}